\documentclass[11pt]{article}

\usepackage[T1]{fontenc}
\usepackage[utf8]{inputenc}
\usepackage[margin=1in]{geometry}
\usepackage{amsmath,amssymb,amsthm}
\usepackage{hyperref}
\usepackage[nameinlink,noabbrev]{cleveref}

\DeclareMathOperator{\per}{per}

\DeclareMathOperator{\sgn}{sgn}

\newcommand{\NN}{\mathbb{N}}

\newtheorem{theorem}{Theorem}[section]
\newtheorem{lemma}[theorem]{Lemma}
\newtheorem{corollary}[theorem]{Corollary}

\newtheorem{conjecture}[theorem]{Conjecture}
\theoremstyle{definition}

\theoremstyle{remark}
\newtheorem{remark}[theorem]{Remark}

\numberwithin{equation}{section}

\title{On Chollet's Permanent Conjecture for Graph Laplacians}
\author{%
  Priyanshu Pant\\
  Department of Computer Science and Engineering, Indian Institute of Technology Indore, India\\
  \texttt{priyanshupant03@gmail.com}
  \and
  Ranveer Singh\\
  Department of Computer Science and Engineering, Indian Institute of Technology Indore, India\\
  \texttt{ranveer@iiti.ac.in}%
}
\date{}

\newcommand{\papkeywords}[1]{\par\medskip\noindent\textbf{Keywords.} #1}

\begin{document}

\maketitle
\begin{abstract}
In 1982, Chollet conjectured that $\per(A\circ B)\le \per(A)\per(B)$
for Hermitian positive semidefinite matrices $A,B$, where $\circ$ denotes the Hadamard (entrywise) product,
and observed that in the real symmetric case it suffices to prove $\per(A\circ A)\le \per(A)^2$.
We prove $\per(A\circ A)\le \per(A)^2$ for symmetric $Z$-matrices with nonnegative diagonal whose support graph is bipartite.
Motivated by this, we study the Laplacian inequality $\per(L_G\circ L_G)\le \per(L_G)^2$ for the graph Laplacian $L_G$. We introduce a compositional framework for permanental inequalities on graph Laplacians, showing that Chollet’s inequality is preserved under vertex coalescence. This enables the extension of the inequality from basic graph classes to large structured families, revealing new tractable regimes for a fundamentally \#P-hard quantity.
\end{abstract}

\papkeywords{Chollet's conjecture, permanent, graph Laplacian, Hadamard product, positive semidefinite matrices, vertex coalescence}

\section{Introduction}

The \emph{determinant} of an $n\times n$ matrix $A=(a_{ij})$ is
\[
\det(A)=\sum_{\sigma\in S_n}\sgn(\sigma)\prod_{i=1}^n a_{i,\sigma(i)},
\]
and the \emph{permanent} is defined analogously, without the sign:
\[
\per(A)=\sum_{\sigma\in S_n}\prod_{i=1}^n a_{i,\sigma(i)}.
\]
Although these expressions differ only by $\sgn(\sigma)$, their algorithmic behavior is very different:
$\det(A)$ is computable in $\mathcal{O}(n^{\omega})$ time ($2 \leq \omega < 2.373$)~\cite{kaltofen2005complexity}, whereas computing $\per(A)$ is $\#P$-complete~\cite{valiant1979complexity}.
Moreover, many basic determinant properties do not extend to permanents.
For example, the permanent is not invariant under elementary row operations~\cite{Minc1978}, and it is not multiplicative in general~\cite{beasley1969maximal}.
As a result, proving nontrivial inequalities for permanents is often difficult.
Permanents of positive semidefinite matrices have received renewed attention due to applications in quantum information and linear optics; see
\cite{AaronsonArkhipov2013,ChakhmakhchyanCerfGarciaPatron2017,ChabaudDeshpandeMehraban2022,Meiburg2023}.

We write $A\succeq 0$ to mean that $A$ is Hermitian positive semidefinite (PSD).
For $A,B\in\mathbb{C}^{n\times n}$, their Hadamard product is $(A\circ B)_{ij}=a_{ij}b_{ij}$.
By the Schur product theorem, $A\circ B\succeq 0$ whenever $A,B\succeq 0$~\cite{hornjohnson2012matrix}.
For determinants there are classical inequalities for Hadamard products of positive semidefinite matrices.
In particular, by Oppenheim's inequality~\cite{Oppenheim1930}, if $A,B\succeq 0$ then
\[
\det(A\circ B)\ \ge\ \det(A)\prod_{i=1}^n b_{ii}.
\]
Combining this with Hadamard's inequality~\cite{hornjohnson2012matrix} $\det(B)\le \prod_{i=1}^n b_{ii}$ yields
\[
\det(A\circ B)\ \ge\ \det(A)\det(B).
\]
In 1982, Chollet~\cite{Chollet1982} proposed the following conjecture.

\begin{conjecture}[Chollet~\cite{Chollet1982}]\label{conj:chollet}
If $A,B\succeq 0$ are Hermitian positive semidefinite, then
\[
\per(A\circ B)\ \le\ \per(A)\,\per(B).
\]
\end{conjecture}

For $n\le 3$, Conjecture~\ref{conj:chollet} was proved by Gregorac--Hentzel~\cite{GregoracHentzel1987}.
For $n=4$, it is known for real PSD matrices by Hutchinson~\cite{Hutchinson2021} and for general PSD matrices by Rodtes~\cite{Rodtes01112024}.
For $n\ge 5$, the conjecture remains open.

Much work on the conjecture focuses on correlation matrices (PSD matrices with diagonal entries $1$). In the positive definite case, diagonal scaling reduces to this form, and the inequality is invariant under such scaling.
See Grone--Pierce~\cite{GronePierce1988}, Zhang~\cite{Zhang1989,Zhang2013,Zhang2016}, and Sa-Nguansin--Rodtes~\cite{SaNguansinRodtes2025}.
These works suggest that structure of the matrix also matters, not only the size.
Chollet~\cite{Chollet1982} observed that it suffices to prove the following inequality: for every Hermitian PSD matrix $A\succeq 0$,
\[
\per(A\circ \overline{A})\ \le\ |\per(A)|^2,
\]
where $\overline{A}$ denotes the entrywise complex conjugate of $A$; 
see~\cite{Chollet1982} for the original reduction.
In particular, in the real symmetric PSD case, $\overline{A}=A$, so it suffices to prove
\[
\per(A\circ A)\ \le\ \per(A)^2.
\]
Motivated by this reduction, we study Chollet's conjecture for the Laplacian matrix $L_G$ of a graph $G$.
Graph Laplacians are real PSD and the structure of $G$ allows a clean interpretation of $\per(L_G)$ in terms of cycle covers.

 Establishing such inequalities offers rare structural insight into a \#P-hard quantity that typically resists both efficient computation and analysis. In particular, studying how the permanent behaves under operations such as the Hadamard product can reveal hidden regularities in structured matrices, including graph Laplacians. These inequalities may also be viewed as correlation-type bounds, with implications for counting problems and probabilistic models on graphs. Moreover, in the setting of graph Laplacians, they naturally connect to combinatorial objects such as cycle covers and matchings, which play a central role in algorithm design and counting complexity. Consequently, progress on this conjecture contributes to a deeper understanding of counting problems, quantum computation, and structural aspects of complexity theory.

\subsection*{Main results.}
We study the Laplacian inequality
\begin{equation}\label{eq:main-ineq}
\per(L_G\circ L_G)\ \le\ \per(L_G)^2.
\end{equation}
Our contributions are as follows.
\begin{itemize}\itemsep2pt
  \item
We prove the matrix inequality $\per(A\circ A)\le \per(A)^2$ for symmetric $Z$-matrices $A=(a_{ij})$
(that is, $a_{ii}\ge 0$ and $a_{ij}\le 0$ for $i\neq j$) whose support graph is bipartite
(Theorem~\ref{thm:bipartite-supported}).
This yields \eqref{eq:main-ineq} for all bipartite graphs (Corollary~\ref{cor:bipartite-graphs}).

  \item
  Let $G = G_1 \cdot G_2$ be obtained by identifying $v_1\in V(G_1)$ and $v_2\in V(G_2)$.
  If \eqref{eq:main-ineq} holds for $G_1$ and $G_2$, and also for the principal submatrices
  $L_{G_1}(v_1)$ and $L_{G_2}(v_2)$, then \eqref{eq:main-ineq} holds for $G$
  (Theorem~\ref{thm:coalescence}).

  \item
  We also establish \eqref{eq:main-ineq} for cycles (Theorem~\ref{thm:cycle}) and for complete graphs (Lemma~\ref{lem:Kn}).
  By iterating the closure theorem, we obtain \eqref{eq:main-ineq} for several additional graph families
  (Section~\ref{sec:consequences}).
\end{itemize}

\subsection*{Organization.}
Section~\ref{sec:prelim} gives the preliminaries related to permanents and PSD matrices.
Section~\ref{sec:bipartite-support} proves $\per(A\circ A)\le \per(A)^2$ for symmetric $Z$-matrices whose support graph is bipartite, and derives \eqref{eq:main-ineq} for bipartite graphs.
Section~\ref{sec:closure} proves the main closure step under vertex coalescence and records two derived closure operations (leaf attachment and single-edge join), together with a diagonal-closure lemma.
Section~\ref{sec:consequences} collects further graph families covered by iterating these operations; it also states the cycle and complete-graph cases, with proofs given in Appendices~\ref{app:cycle-proof} and~\ref{app:clique}.
Finally, Section~\ref{sec:discussion} lists open problems and directions.

\section{Preliminaries}\label{sec:prelim}
\subsection*{Notation and basic definitions.}
We first introduce some additional notations. For $n\in\NN$, we write $[n]:=\{1,2,\dots,n\}$. For sets $U, V$, we write $U\setminus V := \{x\in U : x\notin V\}$. For a matrix $A \in \mathbb{C}^{n\times n}$, write $\overline{A}$ for the entrywise conjugate and
$A^\dagger$ for the conjugate transpose.
We write $A\succeq 0$ for Hermitian positive semidefinite (PSD) matrices and $A\circ B$ for the Hadamard product.
For $R,S\subseteq[n]$, we write $A[R,S]$ for the submatrix of $A$ with row set $R$ and column set $S$; when $R=S$, we write $A[S]:=A[S,S]$.
Similarly, we write $A(R,S)$ for the submatrix of $A$ with row set $[n]\setminus R$ and column set $[n]\setminus S$; when $R=S$, we write $A(S):=A(S,S)$.
In particular, for $i\in[n]$ we write $A(i)$ for the principal submatrix obtained by deleting row $i$ and column $i$.
For a matrix $A$, we write $A(\cdot,j)$ for its $j$-th column and $A(i,\cdot)$ for its $i$-th row.
We write $e_i\in\mathbb{R}^n$ to denote the $i$th standard basis vector.
We write $I_m$ for the $m\times m$ identity matrix and $J_m$ for the $m\times m$ all-ones matrix.
For $i,j\in[n]$, let $E_{ij}$ denote the matrix with a $1$ in entry $(i,j)$ and zeros elsewhere.

In this paper we consider \emph{simple} graphs, that is, undirected graphs with no loops and no multiple edges.
Let $G=(V(G),E(G))$ be a graph on vertex set $V(G)=[n]$.
Its adjacency matrix is the $n\times n$ matrix $A_G=(a_{ij})$, where
\[
a_{ij}=
\begin{cases}
1, & \text{if } \{i,j\}\in E(G),\\
0, & \text{otherwise.}
\end{cases}
\]
The degree of a vertex $i$ is the number of edges incident to $i$, and is denoted by
$d_i=\deg_G(i)$.
The degree matrix is the diagonal matrix $D_G=\mathrm{diag}(d_1,\dots,d_n)$.
The Laplacian matrix of $G$ is
\[
L_G \;=\; D_G-A_G.
\]
For $v\in V(G)$, we write $L_G(v)$ for the principal submatrix obtained by deleting
the row and column indexed by $v$.
For $v\in V(G)$, we write $N_G(v)$ for the set of neighbors of $v$ in $G$.
If $v \in V(G)$, then $G-v$ denotes the graph obtained by deleting $v$ (and all incident edges), and
\begin{equation}\label{eq:laplacian-vertex-delete}
L_{G-v} \;=\; L_G(v) \;-\; \sum_{u\in N_G(v)} E_{uu}.
\end{equation}
In particular, $L_G(v)$ is obtained from $L_{G-v}$ by adding $1$ to the diagonal entry of each neighbor of $v$.

\subsection*{A zero-submatrix criterion for permanents.}
For $(0,1)$ matrices, the Frobenius--K\"onig theorem says that $\per(A)=0$ if and only if
$A$ contains a zero submatrix of size $r\times s$ with $r+s=n+1$; see \cite{BrualdiRyser1991}. For general matrices, we have the following observation.

\begin{lemma}\label{lem:zero-submatrix-per}
Let $A=(a_{ij})$ be an $n\times n$ matrix. Suppose $A$ has an $r\times s$
submatrix $B$ with all entries equal to $0$, and $r+s>n$. Then $\per(A)=0$.
\end{lemma}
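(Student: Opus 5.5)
The plan is a pigeonhole argument showing that every term in the expansion of the permanent vanishes. Let $B=A[R,S]$ be the zero submatrix, with row set $R$ of size $r$ and column set $S$ of size $s$, where $r+s>n$. I expand
\[
\per(A)=\sum_{\sigma\in S_n}\prod_{i=1}^n a_{i,\sigma(i)}
\]
and show that each product $\prod_i a_{i,\sigma(i)}$ contains a factor equal to zero.

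Fix $\sigma\in S_n$ and look at $\sigma(R)\subseteq[n]$. Since $\sigma$ is injective, $|\sigma(R)|=r$, so $\sigma(R)$ and $S$ are subsets of $[n]$ with $|\sigma(R)|+|S|=r+s>n$. By pigeonhole they intersect. Hence there is some $i\in R$ with $\sigma(i)\in S$, and then $a_{i,\sigma(i)}$ is an entry of $B$, so it equals $0$. The product for $\sigma$ therefore vanishes. As $\sigma$ was arbitrary, every term of the sum is zero and $\per(A)=0$.

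No step here is a real obstacle; the only point needing care is that the set count relies on injectivity of $\sigma$. The argument uses no property of the entries beyond the zeros, so it holds over any commutative ring, in particular for complex entries. The hypothesis $r+s>n$ is equivalent to the Frobenius--K\"onig condition $r+s\ge n+1$, but only the direction ``zero block $\Rightarrow$ zero permanent'' holds without the $(0,1)$ assumption, and that is all the statement claims.
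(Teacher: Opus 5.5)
Your proof is correct and is essentially the paper's argument: for each $\sigma$, injectivity gives $|\sigma(R)|=r$, so $r+s>n$ forces $\sigma(R)\cap S\neq\emptyset$, which puts a zero factor in every term of the expansion. Your side remarks are also accurate: the argument works over any commutative ring, and only the forward direction of Frobenius--K\"onig survives for general entries.
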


\begin{proof}
Let $I,J\subseteq[n]$ be the row and column index sets of $B$, so $|I|=r$, $|J|=s$,
and $a_{ij}=0$ for all $i\in I$ and $j\in J$.
By definition,
\[
\per(A)=\sum_{\sigma\in S_n}\ \prod_{i=1}^n a_{i,\sigma(i)}.
\]
Fix $\sigma\in S_n$ and set $\sigma(I):=\{\sigma(i): i\in I\}\subseteq[n]$.
Then $|\sigma(I)|=|I|=r$. Since $r+s>n$, we have $|\sigma(I)|+|J|>n$, hence
$\sigma(I)\cap J\neq\emptyset$. Thus there exists $i\in I$ with $\sigma(i)\in J$,
so $a_{i,\sigma(i)}=0$ and therefore $\prod_{i=1}^n a_{i,\sigma(i)}=0$.
Hence every term in the sum is $0$, and so $\per(A)=0$.
\end{proof}

\subsection*{Lieb’s inequality.}
We will often use the following corollary of a lemma of Lieb~\cite{Lieb1966}.

\begin{lemma}\label{lem:lieb}
Let
\[
A=\begin{pmatrix} B & C \\ C^\dagger & D \end{pmatrix}\succeq 0
\]
be a positive semidefinite Hermitian matrix.
Then\[
\per(A)\ \ge\ \per(B)\,\per(D).
\]
\end{lemma}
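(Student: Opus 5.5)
The plan is to obtain this from Lieb's permanental inequality in the original $2\times 2$ block form. We then reduce to the statement with $C$ removed by an averaging and positivity argument. The key identity is the expansion of $\per(A)$ by Laplace-type decomposition along the block structure. Let $B$ be $k\times k$ and $D$ be $(n-k)\times(n-k)$. Grouping permutations $\sigma\in S_n$ by the number $t$ of indices in $[k]$ that $\sigma$ sends into $[k+1,n]$ gives
\[
\per(A)=\sum_{t\ge 0}\ \sum_{\substack{|R|=|S|=t\\ R,S\subseteq[k]}}\ \sum_{\substack{|R'|=|S'|=t\\ R',S'\subseteq[k+1,n]}} \per B(R,S)\,\per C[R,S']\,\per C^\dagger[R',S]\,\per D(R',S').
\]
The $t=0$ term is exactly $\per(B)\per(D)$, so it suffices to show that the sum of the terms with $t\ge1$ is nonnegative.

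To prove this nonnegativity, I would use the standard tensor (Gram) representation of permanents of PSD matrices. Write $A=X^\dagger X$ with columns $x_1,\dots,x_n$. Then $\per(A)=n!\,\|x_1\vee\cdots\vee x_n\|^2$ in the symmetric tensor power, where $\vee$ is the symmetric product normalized so that $\langle u_1\vee\cdots\vee u_n,\,w_1\vee\cdots\vee w_n\rangle=\frac1{n!}\per(\langle u_i,w_j\rangle)$. Set $u=x_1\vee\cdots\vee x_k$ and $w=x_{k+1}\vee\cdots\vee x_n$. Then $\per(B)=k!\|u\|^2$ and $\per(D)=(n-k)!\|w\|^2$, and the claim becomes
\[
\binom{n}{k}\,\|u\vee w\|^2\ \ge\ \|u\|^2\|w\|^2 .
\]
This is Lieb's inequality for symmetric products. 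I would prove it by expanding $\|u\vee w\|^2$ as a sum over ways to interleave the factors, in the spirit of the displayed decomposition. The $t$-th group of terms equals a positive constant times $\sum \|P_t(u\otimes w)\|^2$-type quantities. Concretely, each group corresponds to an inner product $\langle \Phi_t,\Phi_t\rangle\ge 0$, where $\Phi_t$ is obtained by contracting $t$ factors of $u$ against $t$ factors of $w$.

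The main obstacle is this last step: showing that each $t$-level group is a squared norm, that is, that the block sum
\[
\sum_{R,S,R',S'}\per B(R,S)\,\per C[R,S']\,\per C^\dagger[R',S]\,\per D(R',S')
\]
is a Hermitian form evaluated at a single vector. I would first try pairing $(R,S')$ with $(S,R')$. Then $\per C^\dagger[R',S]=\overline{\per C[S,R']}$. The task is to recognize that $\per B(R,S)$ and $\per D(R',S')$ are entries of the PSD matrices $\big(\per B(R,S)\big)_{R,S}$ and $\big(\per D(R',S')\big)_{R',S'}$, since these are Gram matrices of partial symmetric products. The group then takes the form $\operatorname{tr}(\mathcal B\,\mathcal C\,\mathcal D\,\mathcal C^\dagger)$ with $\mathcal B,\mathcal D\succeq0$, which is nonnegative because $\operatorname{tr}(\mathcal B^{1/2}\mathcal C\mathcal D\mathcal C^\dagger\mathcal B^{1/2})\ge0$. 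Verifying these Gram identities and that the indices match up is the delicate bookkeeping. Once it is done, summing over $t$ gives $\per(A)\ge\per(B)\per(D)$. Since this is a classical result, the paper may simply cite Lieb~\cite{Lieb1966} at this point.
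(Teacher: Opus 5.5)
The paper gives no proof of this lemma. It is quoted from Lieb~\cite{Lieb1966} and used only through its $1\times1$-block case, Corollary~\ref{cor:lieb-principal}. Your final argument is a correct, self-contained proof. The checks are as follows.
\begin{enumerate}
\item The Laplace expansion by the number $t$ of entries drawn from the $C$-block is right.
\item The matrices $\mathcal B=(\per B(R,S))_{R,S}$ and $\mathcal D=(\per D(R',S'))_{R',S'}$ are PSD. After replacing index sets by their complements, $\per B[\alpha,\beta]=m!\,\langle \mathrm{Sym}(x_\alpha),\mathrm{Sym}(x_\beta)\rangle$, which is a Gram matrix.
\item You correctly use $\per C^\dagger[R',S]=\overline{\per C[S,R']}$.
\end{enumerate}
When you do the bookkeeping, the $t$-th group comes out as $\operatorname{tr}(\mathcal B^{T}\mathcal C\,\mathcal D^{T}\mathcal C^\dagger)$ rather than $\operatorname{tr}(\mathcal B\,\mathcal C\,\mathcal D\,\mathcal C^\dagger)$. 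This is harmless: $\mathcal B^{T}=\overline{\mathcal B}$ and $\mathcal D^{T}=\overline{\mathcal D}$ are again PSD, so the group equals $\|(\mathcal B^{T})^{1/2}\mathcal C(\mathcal D^{T})^{1/2}\|_F^2\ge 0$.

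The symmetric-tensor reformulation $\binom nk\|u\vee w\|^2\ge\|u\|^2\|w\|^2$ and the $\Phi_t$ sketch are unnecessary detours. The latter is also too vague to carry any weight on its own; the trace argument alone finishes the proof.

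Compared with the paper's citation, your route proves more than the lemma states. Every $T_t$ is individually nonnegative, and the argument uses only $B\succeq 0$ and $D\succeq 0$, never $A\succeq 0$. For example, $B=I$, $D=I$ gives $\per\begin{pmatrix}I&C\\C^\dagger&I\end{pmatrix}=\sum_{R,S'}|\per C[R,S']|^2\ge 1$ for arbitrary $C$. The paper's citation buys brevity, which is adequate since only Corollary~\ref{cor:lieb-principal} is needed downstream.
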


\begin{corollary}\label{cor:lieb-principal}
Let $A=(a_{ij})\in\mathbb{C}^{n\times n}$ be a Hermitian positive semidefinite matrix, and fix $i\in[n]$. Then
\[
\per(A)\ \ge\ a_{ii}\,\per(A(i)).
\]
\end{corollary}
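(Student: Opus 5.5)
We apply Lemma~\ref{lem:lieb} to a suitable block decomposition of $A$. Fix $i\in[n]$ and simultaneously permute rows and columns of $A$ so that index $i$ comes first. Let $P$ be the permutation matrix realizing this. Then $P A P^{\top}$ is again Hermitian positive semidefinite, since congruence by a unitary preserves both properties. Moreover, $\per(PAP^{\top})=\per(A)$, because the permanent is invariant under permuting rows and permuting columns.

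Now write
\[
PAP^{\top}=\begin{pmatrix} a_{ii} & c \\ c^\dagger & A(i)\end{pmatrix},
\]
where $c$ is the row vector of the off-diagonal entries of row $i$. The lower-right block is exactly the principal submatrix $A(i)$. This is a $2\times 2$ block partition with $B=(a_{ii})$ of size $1\times 1$ and $D=A(i)$, so Lemma~\ref{lem:lieb} gives
\[
\per(A)=\per(PAP^\top)\ \ge\ \per(a_{ii})\,\per(A(i))=a_{ii}\,\per(A(i)).
\]

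The only points needing care are bookkeeping:
\begin{itemize}
\item The permanent of a $1\times 1$ matrix is its entry.
\item The diagonal entry $a_{ii}$ is real and nonnegative, since $A\succeq 0$, so the inequality makes sense as a real inequality. Note that $\per(A)$ is real and nonnegative for PSD $A$; this is part of Lieb's lemma as stated.
\item The edge case $n=1$ should be read with the convention $\per$ of the empty matrix $=1$, which makes the statement trivial.
\end{itemize}
There is no real obstacle; the permutation-invariance step is the only thing to state explicitly.
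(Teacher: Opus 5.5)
Your proof is correct and is the paper's argument: move index $i$ to the front by a simultaneous row/column permutation, which preserves both the permanent and positive semidefiniteness; then take the $1\times 1$ block $B=(a_{ii})$ and $D=A(i)$, and apply Lemma~\ref{lem:lieb}. One minor point: nonnegativity of $\per(A)$ comes from Lemma~\ref{fact:per-nonneg}, not from Lemma~\ref{lem:lieb} itself, but your argument does not depend on it.
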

\begin{proof}
Fix $i\in[n]$. Since the permanent is invariant under simultaneously permuting rows and columns,
we may reorder indices so that $i$ is first. Hence we can write $A$ in block form as
\[
A=
\begin{pmatrix}
a_{ii} & u\\
u^\dagger & A(i)
\end{pmatrix}.
\]
Applying Lemma~\ref{lem:lieb} to this block decomposition gives
\[
\per(A)\ \ge\ a_{ii}\,\per\bigl(A(i)\bigr). \qedhere
\]
\end{proof}

\subsection*{Nonnegativity of PSD permanents.}
We will also use the following standard nonnegativity property of permanents of PSD matrices.

\begin{lemma}[\cite{Paksoy2023}]\label{fact:per-nonneg}
If $A\succeq 0$ is Hermitian positive semidefinite, then $\per(A) \ge 0$.
Consequently, for every $S\subseteq[n]$, the principal submatrix $A[S]$ is also PSD and hence
$\per(A[S])\ge 0$.
\end{lemma}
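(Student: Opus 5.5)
The statement to prove is Lemma~\ref{fact:per-nonneg}: for $A\succeq 0$ we have $\per(A)\ge 0$, and the same holds for every principal submatrix $A[S]$.

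The plan is to realize $\per(A)$ as a squared norm in a symmetric tensor power. Since $A\succeq 0$, we factor $A=V^\dagger V$ with columns $v_1,\dots,v_n\in\mathbb{C}^n$, so that $a_{ij}=\langle v_i,v_j\rangle$. In $(\mathbb{C}^n)^{\otimes n}$ we consider the vector
\[
w=\sum_{\sigma\in S_n} v_{\sigma(1)}\otimes\cdots\otimes v_{\sigma(n)}.
\]
The inner product of elementary tensors is the product of the coordinatewise inner products. Expanding $\langle w,w\rangle$ therefore gives
\[
\langle w,w\rangle=\sum_{\sigma,\tau\in S_n}\prod_{k=1}^n \langle v_{\sigma(k)},v_{\tau(k)}\rangle .
\]
Reindex this by $i=\sigma(k)$ and $\pi=\tau\sigma^{-1}$. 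Each of the $n!$ choices of $\sigma$ then contributes the same sum $\sum_{\pi}\prod_i a_{i,\pi(i)}$. Hence $\langle w,w\rangle=n!\,\per(A)$, and so $\per(A)=\|w\|^2/n!\ge 0$.

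The one step that needs care is the order of arguments in the inner product, so that we actually get $\per(A)$ rather than $\per(A^T)$ or $\per(\overline{A})$. This is harmless: $\per(A^T)=\per(A)$ always holds. For Hermitian $A$ we also have $\overline{A}=A^T$, so $\per(\overline{A})=\per(A)$, and in particular $\per(A)$ is real. Either way the resulting quantity is a nonnegative real number.

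For the consequence, fix $S\subseteq[n]$ and any $x\in\mathbb{C}^{|S|}$. Extend $x$ by zeros to a vector $\tilde x\in\mathbb{C}^n$. Then $x^\dagger A[S]x=\tilde x^\dagger A\tilde x\ge 0$, and $A[S]$ is Hermitian, so $A[S]\succeq 0$. Applying the first part to $A[S]$ gives $\per(A[S])\ge 0$.

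I do not expect a real obstacle. The Gram factorization exists for every PSD matrix, for instance via $V=A^{1/2}$, and the rest is bookkeeping.
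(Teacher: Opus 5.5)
Your proof is correct. With $a_{ij}=\langle v_i,v_j\rangle$, the substitution $\pi=\tau\sigma^{-1}$ does turn $\langle w,w\rangle$ into $n!\,\per(A)$, and the zero-extension step correctly gives $A[S]\succeq 0$.

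The paper gives no proof of this lemma; it only cites it from \cite{Paksoy2023}. Your Gram/symmetric-tensor computation is therefore a self-contained replacement for that citation, not a variant of an argument in the text. What it buys is an exact identity, $\per(A)=\|w\|^2/n!$, which shows reality and nonnegativity at once and uses nothing beyond the factorization $A=V^\dagger V$.

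The paper's own toolkit would also have sufficed. Corollary~\ref{cor:lieb-principal} gives $\per(A)\ge a_{11}\per(A(1))$, with $a_{11}\ge 0$ and $A(1)\succeq 0$. Induction on $n$ then yields $\per(A)\ge\prod_i a_{ii}\ge 0$, which even provides a quantitative lower bound. That route is shorter on the page but rests on Lieb's inequality, a considerably deeper result, whereas your argument is elementary and independent of it.
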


\section{Symmetric $Z$-matrices with bipartite support}\label{sec:bipartite-support}
In this section, we prove the inequality $\per(A\circ A)\le \per(A)^2$ for symmetric $Z$-matrices with bipartite support, and we derive \eqref{eq:main-ineq} for bipartite graphs.
A real matrix $A=(a_{ij})$ is a \emph{$Z$-matrix} if $a_{ij}\le 0$ for all $i\neq j$.
For a symmetric matrix $A\in\mathbb{R}^{n\times n}$, define its \emph{support graph} to be the simple graph on vertex set $[n]$
with an edge $\{i,j\}$ iff $i\neq j$ and $a_{ij}\neq 0$.

\begin{theorem}\label{thm:bipartite-supported}
Let $A\in\mathbb{R}^{n\times n}$ be a symmetric $Z$-matrix with $a_{ii}\ge 0$ for all $i$.
Let $G$ be the support graph of $A$. If $G$ is bipartite, then
\[
\per(A\circ A)\ \le\ \per(A)^2.
\]
\end{theorem}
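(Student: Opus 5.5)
The plan is to use bipartiteness to remove all the signs, after which the inequality becomes an elementary fact about sums of nonnegative terms. Note that positive semidefiniteness will not be needed at all.

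\textbf{Step 1: a signature similarity.} Let $V(G)=X\sqcup Y$ be a bipartition of the support graph. Define $D=\mathrm{diag}(d_1,\dots,d_n)$ with $d_i=1$ for $i\in X$ and $d_i=-1$ for $i\in Y$, and set $B=DAD$, so that $b_{ij}=d_id_ja_{ij}$.
\begin{itemize}
\item Diagonal entries: $b_{ii}=a_{ii}\ge 0$.
\item Off-diagonal entries with $a_{ij}\neq 0$: then $\{i,j\}$ is an edge of $G$, so $i$ and $j$ lie on opposite sides and $d_id_j=-1$. Hence $b_{ij}=-a_{ij}\ge 0$, because $A$ is a $Z$-matrix.
\item Off-diagonal entries with $a_{ij}=0$: then $b_{ij}=0$.
\end{itemize}
So $B=|A|$ entrywise, and $B$ is a nonnegative matrix. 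Moreover $B\circ B=A\circ A$, since $d_i^2d_j^2=1$.

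\textbf{Step 2: the permanent is unchanged.} For each $\sigma\in S_n$,
\[
\prod_{i=1}^n b_{i\sigma(i)}=\Bigl(\prod_{i=1}^n d_i\Bigr)\Bigl(\prod_{i=1}^n d_{\sigma(i)}\Bigr)\prod_{i=1}^n a_{i\sigma(i)}=\Bigl(\prod_{i=1}^n d_i\Bigr)^2\prod_{i=1}^n a_{i\sigma(i)}=\prod_{i=1}^n a_{i\sigma(i)}.
\]
Summing over $\sigma$ gives $\per(B)=\per(A)$.

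\textbf{Step 3: compare sums of squares with the square of the sum.} For each $\sigma$, write $t_\sigma=\prod_i b_{i\sigma(i)}$, which is nonnegative. Then
\[
\per(A\circ A)=\per(B\circ B)=\sum_{\sigma}t_\sigma^2\le\Bigl(\sum_\sigma t_\sigma\Bigr)^2=\per(B)^2=\per(A)^2 .
\]
The inequality holds because expanding the square adds only the nonnegative cross terms $t_\sigma t_\tau$ with $\sigma\neq\tau$.

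\textbf{Main obstacle.} The only point needing care is Step 1: checking that the $Z$-sign pattern combined with bipartiteness makes every nonzero off-diagonal entry of $DAD$ positive. This is exactly where both hypotheses are used, and after it the argument is routine.
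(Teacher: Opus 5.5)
Your proof is correct, and its skeleton matches the paper's. Both arguments show that every permutation term $x_\sigma=\prod_i a_{i\sigma(i)}$ is nonnegative, and then conclude from $\sum_\sigma x_\sigma^2\le(\sum_\sigma x_\sigma)^2$.

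The difference is in how nonnegativity of $x_\sigma$ is obtained. The paper decomposes each $\sigma$ with $x_\sigma\neq 0$ into cycles. Each nontrivial cycle follows edges of $G$, so it has even length and contributes an even number of negative factors. You instead apply the signature similarity $A\mapsto DAD$, with $D=\mathrm{diag}(\pm1)$ read off from the bipartition. This turns $A$ into $|A|$ without changing any individual permutation term, since $\prod_i d_id_{\sigma(i)}=(\prod_i d_i)^2=1$.

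Your version avoids the cycle bookkeeping. That includes the small point that a transposition $(i\,j)$ is an edge traversed twice rather than a cycle of $G$, which is harmless since its length is even anyway. It also exhibits a slightly more structural fact: $\per(A)=\per(|A|)$ and $A\circ A=|A|\circ|A|$, which reduces the theorem to the trivial case of entrywise nonnegative matrices.

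The paper's version instead isolates the cycle-parity mechanism. That same mechanism identifies the negative terms, the two oriented $n$-cycles, in the odd-cycle computation of the appendix.
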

\begin{proof}
For each $\sigma\in S_n$, set
\begin{equation}\label{eq:def-xsigma}
x_\sigma\ :=\ \prod_{i=1}^n a_{i,\sigma(i)}.
\end{equation}
Then, by definition of the permanent,
\begin{equation}\label{eq:perL}
\per(A)=\sum_{\sigma\in S_n} x_\sigma,
\end{equation}
and since $(A\circ A)_{ij}=a_{ij}^2$,
\begin{equation}\label{eq:perLL}
\per(A\circ A)
=\sum_{\sigma\in S_n}\ \prod_{i=1}^n \bigl(a_{i,\sigma(i)}^2\bigr)
=\sum_{\sigma\in S_n} x_\sigma^2.
\end{equation}

\noindent We claim that for every $\sigma\in S_n$, $x_\sigma\ge 0$.
If $x_\sigma=0$ this is immediate, so assume $x_\sigma\neq 0$.
Then for each $i$, either $\sigma(i)=i$ and $a_{i,\sigma(i)}=a_{ii}\ge0$,
or $\sigma(i)\neq i$ and $\{i,\sigma(i)\}\in E(G)$, in which case $a_{i,\sigma(i)}\neq 0$
and hence $a_{i,\sigma(i)}<0$.

Since $\sigma$ is a permutation, the set $\{\,i:\sigma(i)\neq i\,\}$ decomposes into disjoint cycles,
each of length $\ell\ge 2$. For $x_\sigma\neq 0$, every such cycle uses only nonzero off-diagonal entries.
In particular, if $(i_1\,i_2\,\dots\,i_\ell)$ is such a cycle, then
$\{i_1,i_2\},\{i_2,i_3\},\dots,\{i_{\ell-1},i_\ell\},\{i_\ell,i_1\}\in E(G)$, so it corresponds to a cycle in $G$.
As $G$ is bipartite, all cycles in $G$ have even length, so each $\ell$ is even.
Therefore the product of the $\ell$ off-diagonal factors along this cycle is nonnegative (being a product of an even number of negative numbers).
Combining over all disjoint cycles (and the fixed points, which contribute factors $a_{ii}\ge 0$) yields $x_\sigma\ge 0$.
This proves the claim.

Since $x_\sigma\ge 0$ for all $\sigma$,
\begin{equation}\label{eq:sumsq}
\Bigl(\sum_{\sigma\in S_n} x_\sigma\Bigr)^2
=\sum_{\sigma\in S_n} x_\sigma^2
+\sum_{\sigma\neq \tau} x_\sigma x_\tau
\ \ge\ \sum_{\sigma\in S_n} x_\sigma^2.
\end{equation}
Combining \eqref{eq:perL}, \eqref{eq:perLL}, and \eqref{eq:sumsq} gives
\[
\per(A\circ A)
=\sum_{\sigma\in S_n} x_\sigma^2
\ \le\
\left(\sum_{\sigma\in S_n} x_\sigma\right)^2
=\bigl(\per(A)\bigr)^2.
\]
\end{proof}

\begin{corollary}\label{cor:bipartite-graphs}
Let $G$ be a bipartite graph with Laplacian matrix $L_G$. Then
\[
\per(L_G\circ L_G)\ \le\ \bigl(\per(L_G)\bigr)^2.
\]
\end{corollary}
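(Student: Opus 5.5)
The plan is to obtain the corollary as a direct specialization of Theorem~\ref{thm:bipartite-supported} with $A=L_G$. That requires three hypotheses: $L_G$ is real symmetric, it is a $Z$-matrix with nonnegative diagonal, and its support graph is bipartite.

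First, I will record that $L_G=D_G-A_G$ is real and symmetric, since $A_G$ is symmetric and $D_G$ is diagonal. Its diagonal entries are the degrees $d_i\ge 0$. For $i\ne j$ the off-diagonal entry is $-a_{ij}\in\{0,-1\}$, which is $\le 0$, so $L_G$ is a symmetric $Z$-matrix with nonnegative diagonal.

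Second, I will identify the support graph of $L_G$. For $i\neq j$ we have $(L_G)_{ij}\neq 0$ exactly when $a_{ij}=1$, that is, when $\{i,j\}\in E(G)$. Because $G$ is simple, this shows the support graph of $L_G$ is exactly $G$, which is bipartite by assumption.

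With these checks in place, Theorem~\ref{thm:bipartite-supported} applied to $A=L_G$ gives $\per(L_G\circ L_G)\le\per(L_G)^2$. There is no real obstacle here; the only point needing care is the second step, namely that the support graph coincides with $G$. This uses the simplicity of $G$ (no loops, so the diagonal plays no role in the support graph) and the fact that every edge gives a nonzero entry $-1$.
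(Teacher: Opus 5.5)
Your proposal is correct and matches the paper's proof. Like the paper, you check that $L_G$ is a symmetric $Z$-matrix with nonnegative (degree) diagonal whose support graph is exactly $G$, and then apply Theorem~\ref{thm:bipartite-supported} with $A=L_G$.
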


\begin{proof}
The Laplacian $L_G=[\ell_{ij}]$ is symmetric, satisfies $\ell_{ii}=\deg_G(i)\ge 0$, and for
$i\neq j$ we have $\ell_{ij}\in\{0,-1\}$ with $\ell_{ij}=-1$ if and only if $\{i,j\}\in E(G)$.
Thus the support graph of $L_G$ is $G$.
Since $G$ is bipartite, Theorem~\ref{thm:bipartite-supported} applies to $A=L_G$.
\end{proof}

\begin{corollary}\label{cor:bipartite-submatrices}
Let $G$ be a bipartite graph and let $S\subseteq V(G)$. Then
\[
\per\!\bigl(L_G[S]\circ L_G[S]\bigr)\ \le\ \bigl(\per(L_G[S])\bigr)^2.
\]
In particular, for every $v\in V(G)$,
\[
\per\!\bigl(L_G(v)\circ L_G(v)\bigr)\ \le\ \bigl(\per(L_G(v))\bigr)^2.
\]
\end{corollary}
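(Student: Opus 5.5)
The plan is to apply Theorem~\ref{thm:bipartite-supported} directly to the principal submatrix $A:=L_G[S]$. The corollary then reduces to checking that $A$ meets the theorem's hypotheses, after relabelling the indices of $S$ as $1,\dots,|S|$.

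First, $A$ is a symmetric $Z$-matrix with nonnegative diagonal. Symmetry is inherited from $L_G$. Its diagonal entries are $\deg_G(i)\ge 0$ for $i\in S$. Its off-diagonal entries are $\ell_{ij}\in\{0,-1\}$ for distinct $i,j\in S$, so they are $\le 0$.

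Second, the support graph of $A$ is exactly the induced subgraph $G[S]$. For distinct $i,j\in S$ we have $\ell_{ij}\neq 0$ if and only if $\{i,j\}\in E(G)$. Note that the diagonal entries of $A$ are degrees in $G$, not in $G[S]$. This does not matter, because the support graph only records off-diagonal entries. An induced subgraph of a bipartite graph is bipartite: restrict the bipartition to $S$, or note that any cycle in $G[S]$ is a cycle in $G$ and so has even length. Hence Theorem~\ref{thm:bipartite-supported} gives $\per(A\circ A)\le\per(A)^2$. Since $(L_G\circ L_G)[S]=L_G[S]\circ L_G[S]$, this is the first claim.

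For the ``in particular'' statement, take $S=V(G)\setminus\{v\}$, so that $L_G[S]=L_G(v)$. In the degenerate case $S=\emptyset$, both sides equal $1$ under the convention $\per$ of an empty matrix $=1$, so that case is trivial. I expect no real obstacle. The only point needing care is that the diagonal of $L_G[S]$ is not the Laplacian diagonal of $G[S]$. This is exactly why the theorem is phrased for general $Z$-matrices rather than for Laplacians.
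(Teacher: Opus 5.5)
Your proposal is correct and matches the paper's proof: both apply Theorem~\ref{thm:bipartite-supported} to $A=L_G[S]$ after checking that it is a symmetric $Z$-matrix with nonnegative diagonal whose support graph is the induced (hence bipartite) subgraph of $G$ on $S$. Your observation that the diagonal entries are degrees in $G$ rather than in $G[S]$, and so do not affect the support graph, is exactly why the theorem is stated for general $Z$-matrices.
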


\begin{proof}
Let $A:=L_G[S]$. Then $A$ is symmetric, $a_{ii}\ge 0$ for all $i\in S$, and for
$i\neq j$ we have $a_{ij}\in\{0,-1\}$ with
\[
a_{ij}=-1 \quad\Longleftrightarrow\quad \{i,j\}\in E(G)\ \text{and}\ i,j\in S.
\]
Therefore the support graph of $A$ is the graph on vertex set $S$ whose edges are exactly the edges
$\{i,j\}\in E(G)$ with both endpoints in $S$. This support graph is a subgraph of $G$, hence bipartite
because $G$ is bipartite. Thus Theorem~\ref{thm:bipartite-supported} applies to $A$.
\end{proof}

\begin{remark}
Positive semidefiniteness is not used in the proof of Theorem~\ref{thm:bipartite-supported}.
Outside the Hermitian positive semidefinite setting, Chollet’s Conjecture~\ref{conj:chollet} is not true in general.
It would be interesting to find larger classes of matrices for which
$\per(A\circ A)\le \per(A)^2$, or more generally $\per(A\circ\overline{A})\le |\per(A)|^2$, still holds.
\end{remark}

\section{Closure operations}\label{sec:closure}
\subsection{Vertex coalescence and the closure theorem}
Let $G_1$ and $G_2$ be graphs, and choose vertices $v_1\in V(G_1)$ and $v_2\in V(G_2)$.
The \emph{vertex coalescence} of $(G_1,v_1)$ and $(G_2,v_2)$ is the graph obtained by  identifying $v_1$ and $v_2$ as a single vertex, denoted by
$G \;=\; G_1 \cdot G_2,
$ and write $v$ for the merged vertex. Note that 
\[
\deg_G(v)=\deg_{G_1}(v_1)+\deg_{G_2}(v_2).
\]
Formally,
\[
V(G) \;=\; (V(G_1)\setminus\{v_1\})\ \cup\ (V(G_2)\setminus\{v_2\})\ \cup\ \{v\},
\]
and $E(G)$ is obtained from $E(G_1)\cup E(G_2)$ by replacing every edge incident to $v_1$ or $v_2$
by an edge incident to $v$.

\begin{theorem}\label{thm:coalescence}
Let $G = G_1 \cdot G_2$ be the vertex coalescence of $(G_1,v_1)$ and $(G_2,v_2)$, and let $v$ be the merged vertex.
Assume that for $i=1,2$,
\[
\per(L_{G_i}\circ L_{G_i})\le \per(L_{G_i})^2
\qquad\text{and}\qquad
\per\!\bigl(L_{G_i}(v_i)\circ L_{G_i}(v_i)\bigr)\le \per\!\bigl(L_{G_i}(v_i)\bigr)^2.
\]
Then
\[
\per(L_G\circ L_G)\ \le\ \per(L_G)^2.
\]
\end{theorem}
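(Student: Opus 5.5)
The plan is to derive an exact decomposition of both $\per(L_G)$ and $\per(L_G\circ L_G)$ in terms of the corresponding quantities for $G_1,G_2$ and their vertex-deleted principal submatrices, and then compare term by term.

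\textbf{Block structure and decomposition of $\per(L_G)$.} Order $V(G)$ as $(V(G_1)\setminus\{v_1\}),\,v,\,(V(G_2)\setminus\{v_2\})$. Then:
\begin{itemize}
\item $L_G$ has no nonzero entries between the two outer blocks;
\item $L_G(v)=L_{G_1}(v_1)\oplus L_{G_2}(v_2)$;
\item the $(v,v)$ entry is $d_1+d_2$, where $d_i=\deg_{G_i}(v_i)$;
\item the row and column of $v$ restricted to side $i$ agree with those of $v_i$ in $L_{G_i}$.
\end{itemize}
Split the permutations $\sigma$ with nonzero contribution into those fixing $v$ and those having $v$ in a cycle of length $\ge 2$. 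In the second case, every consecutive pair in the cycle must index a nonzero entry, so the cycle cannot cross between the sides except through $v$. Since it visits $v$ only once, the cycle lies entirely in $\{v\}\cup$ side $1$ or entirely in $\{v\}\cup$ side $2$.

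Write $a_i=\per(L_{G_i})$ and $b_i=\per(L_{G_i}(v_i))$, and let $C_i$ be the contribution to $a_i$ of permutations not fixing $v_i$, so $a_i=d_ib_i+C_i$. The case split gives
\[
\per(L_G)=(d_1+d_2)b_1b_2+C_1b_2+C_2b_1=a_1b_2+b_1a_2 .
\]

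\textbf{Decomposition of $\per(L_G\circ L_G)$.} Apply the same argument to $L_G\circ L_G$. It has the same zero pattern, its $v$-diagonal entry is $(d_1+d_2)^2$, and its side blocks are the Hadamard squares of the corresponding blocks. Principal submatrices commute with $\circ$. Writing $Q_i=\per(L_{G_i}\circ L_{G_i})$ and $Q_i'=\per(L_{G_i}(v_i)\circ L_{G_i}(v_i))$, we have $Q_i=d_i^2Q_i'+C_i'$, where $C_i'$ is the analogous non-fixing contribution. Hence
\[
\per(L_G\circ L_G)=Q_1Q_2'+Q_1'Q_2+2d_1d_2Q_1'Q_2'.
\]
The cross term $2d_1d_2$ comes from expanding $(d_1+d_2)^2$.

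\textbf{Comparison.} All of $a_i,b_i,Q_i,Q_i'$ are nonnegative by Lemma~\ref{fact:per-nonneg}, since $L_{G_i}\succeq 0$ and $L_{G_i}\circ L_{G_i}\succeq 0$ by the Schur product theorem, and the same holds for their principal submatrices. So the hypotheses $Q_i\le a_i^2$ and $Q_i'\le b_i^2$ can be multiplied, giving
\[
\per(L_G\circ L_G)\le a_1^2b_2^2+b_1^2a_2^2+2d_1d_2b_1^2b_2^2 .
\]
The target is
\[
\per(L_G)^2=a_1^2b_2^2+b_1^2a_2^2+2a_1a_2b_1b_2 ,
\]
so it remains to show $d_1d_2b_1b_2\le a_1a_2$. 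This follows from Corollary~\ref{cor:lieb-principal} applied to $L_{G_i}$ at $v_i$, which gives $a_i\ge d_ib_i\ge 0$.

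The main obstacle is the combinatorial step: justifying rigorously that a cycle of $\sigma$ through $v$ stays on one side, and that the resulting sums are exactly $C_ib_{3-i}$ (respectively $C_i'Q_{3-i}'$). I would do this via the zero-submatrix pattern: once $v$'s cycle is confined to side $i$, the remaining rows of side $3-i$ must be permuted among themselves, which yields the factor $\per$ of the other block.
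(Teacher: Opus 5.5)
Your proof is correct and follows the paper's argument. It rests on the same two identities, $\per(L_G)=a_1b_2+b_1a_2$ and $\per(L_G\circ L_G)=Q_1Q_2'+Q_1'Q_2+2d_1d_2Q_1'Q_2'$, followed by the same term-by-term comparison using nonnegativity and Corollary~\ref{cor:lieb-principal} in the form $a_i\ge d_ib_i$. The only difference is cosmetic: you derive the identities by classifying permutations according to the cycle through $v$, whereas the paper splits the $v$-column and $v$-row by multilinearity, kills the two cross terms with Lemma~\ref{lem:zero-submatrix-per}, and isolates the cross term by writing $L_G\circ L_G=M+2d_1d_2E_{vv}$.
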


\begin{proof}
Let
\begin{equation}\label{eq:coalesce-U}
U_1:=V(G_1)\setminus\{v_1\},\qquad
U_2:=V(G_2)\setminus\{v_2\},
\end{equation}
and order the vertices of $G$ as $(U_1,\,v,\,U_2)$.
Let
\begin{equation}\label{eq:coalesce-c}
c_1:=L_{G_1}[U_1,\{v_1\}],\qquad c_2:=L_{G_2}[U_2,\{v_2\}],
\end{equation}
and set $d_i:=\deg_{G_i}(v_i)$ for $i=1,2$.
Then the Laplacian of $G$ has the block form
\begin{equation}\label{eq:coalesce-L}
L_G=
\begin{bmatrix}
L_{G_1}(v_1) & c_1 & 0\\
c_1^\top & d_1+d_2 & c_2^\top\\
0 & c_2 & L_{G_2}(v_2)
\end{bmatrix}.
\end{equation}

\smallskip
We first expand $\per(L_G)$ using multilinearity in the $v$--column.
Using \eqref{eq:coalesce-L},
\begin{equation}\label{eq:coalesce-colsplit}
L_G(\,\cdot\,,v)
=
\begin{bmatrix} c_1 \\ d_1 \\ 0 \end{bmatrix}
+
\begin{bmatrix} 0 \\ d_2 \\ c_2 \end{bmatrix}.
\end{equation}
Hence
\begin{equation}\label{eq:coalesce-per-col}
\per(L_G)=\per(A_1)+\per(A_2),
\end{equation}
where
\[
A_1:=
\begin{bmatrix}
L_{G_1}(v_1) & c_1 & 0\\
c_1^\top & d_1 & c_2^\top\\
0 & 0 & L_{G_2}(v_2)
\end{bmatrix},
\qquad
A_2:=
\begin{bmatrix}
L_{G_1}(v_1) & 0 & 0\\
c_1^\top & d_2 & c_2^\top\\
0 & c_2 & L_{G_2}(v_2)
\end{bmatrix}.
\]

Now apply multilinearity in the $v$--row to each term.
For $A_1$ we use
\(
A_1(v,\,\cdot\,)=\bigl[c_1^\top\ \ d_1\ \ 0\bigr]+\bigl[0\ \ 0\ \ c_2^\top\bigr],
\)
and for $A_2$ we use
\(
A_2(v,\,\cdot\,)=\bigl[c_1^\top\ \ 0\ \ 0\bigr]+\bigl[0\ \ d_2\ \ c_2^\top\bigr].
\)
Thus \eqref{eq:coalesce-per-col} becomes
\begin{align}
\per(L_G)
&=
\per\!\begin{bmatrix}
L_{G_1}(v_1) & c_1 & 0\\
c_1^\top & d_1 & 0\\
0 & 0 & L_{G_2}(v_2)
\end{bmatrix}
+
\per\!\begin{bmatrix}
L_{G_1}(v_1) & c_1 & 0\\
0 & 0 & c_2^\top\\
0 & 0 & L_{G_2}(v_2)
\end{bmatrix}\notag\\
&\qquad+
\per\!\begin{bmatrix}
L_{G_1}(v_1) & 0 & 0\\
c_1^\top & 0 & 0\\
0 & c_2 & L_{G_2}(v_2)
\end{bmatrix}
+
\per\!\begin{bmatrix}
L_{G_1}(v_1) & 0 & 0\\
0 & d_2 & c_2^\top\\
0 & c_2 & L_{G_2}(v_2)
\end{bmatrix}.
\label{eq:coalesce-per-4}
\end{align}


In \eqref{eq:coalesce-per-4}, the second matrix has a zero submatrix on rows $\{v\}\cup U_2$ and columns $U_1\cup\{v\}$.
The third matrix has a zero submatrix on rows $U_1\cup\{v\}$ and columns $\{v\}\cup U_2$.
In both cases the numbers of rows and columns add up to $
(|U_2|+1)+(|U_1|+1)=n+1>n.$
By Lemma~\ref{lem:zero-submatrix-per}, both permanents are $0$.
Therefore only the first and fourth terms remain,
\begin{equation}\label{eq:coalesce-per-clean}
\per(L_G)
=
\per(L_{G_1})\,\per(L_{G_2}(v_2))
+
\per(L_{G_1}(v_1))\,\per(L_{G_2}).
\end{equation}

\medskip
Now consider $\per(L_G\circ L_G)$. Using \eqref{eq:coalesce-L},
\begin{equation}\label{eq:coalesce-H}
L_G\circ L_G=
\begin{bmatrix}
(L_{G_1}\circ L_{G_1})(v_1) & c_1^{\circ2} & 0\\
(c_1^{\circ2})^\top & (d_1+d_2)^2 & (c_2^{\circ2})^\top\\
0 & c_2^{\circ2} & (L_{G_2}\circ L_{G_2})(v_2)
\end{bmatrix},
\end{equation}
where $c_i^{\circ2}$ denotes the entrywise square of $c_i$.
Since
\begin{equation}\label{eq:coalesce-diag}
(d_1+d_2)^2=d_1^2+d_2^2+2d_1d_2,
\end{equation}
we can write
\begin{equation}\label{eq:coalesce-M}
L_G\circ L_G = M + 2d_1d_2\,E_{vv},
\end{equation}
where $M$ is obtained using \eqref{eq:coalesce-H} by replacing the $(v,v)$--entry by $d_1^2+d_2^2$.
We now compute $\per(M)$ as above by splitting the $v$--column and then the $v$--row, which gives 
\begin{equation}\label{eq:coalesce-perM}
\per(M)
=
\per(L_{G_1}\circ L_{G_1})\,\per\big((L_{G_2}\circ L_{G_2})(v_2)\big)
+
\per\big((L_{G_1}\circ L_{G_1})(v_1)\big)\,\per(L_{G_2}\circ L_{G_2}).
\end{equation}
Moreover, \eqref{eq:coalesce-M} shows that the $v$--th column of $L_G\circ L_G$ is the $v$--th column of $M$ plus $2d_1d_2\,e_v$.
By multilinearity of the permanent in the $v$--th column,
\begin{equation}\label{eq:coalesce-perHad}
\per(L_G\circ L_G)
=
\per(M)
+
2d_1d_2\,
\per\big((L_{G_1}\circ L_{G_1})(v_1)\big)\,
\per\big((L_{G_2}\circ L_{G_2})(v_2)\big).
\end{equation}
By the assumptions,
\begin{equation}\label{eq:coalesce-ind}
\per(L_{G_i}\circ L_{G_i})\le \per(L_{G_i})^2,
\qquad
\per\big((L_{G_i}\circ L_{G_i})(v_i)\big)\le \per(L_{G_i}(v_i))^2
\qquad (i=1,2).
\end{equation}
Substituting into \eqref{eq:coalesce-perHad} and using \eqref{eq:coalesce-perM} gives
\begin{align}
\per(L_G\circ L_G)
&\le
\per(L_{G_1})^2\,\per(L_{G_2}(v_2))^2
+
\per(L_{G_1}(v_1))^2\,\per(L_{G_2})^2 \notag\\
&\qquad+
2d_1d_2\,\per(L_{G_1}(v_1))^2\,\per(L_{G_2}(v_2))^2.
\label{eq:coalesce-boundHad}
\end{align}
By Lemma~\ref{fact:per-nonneg} (and since $L_{G_i}$ and all its principal submatrices are PSD),
all permanents appearing below are nonnegative. In particular, we may multiply inequalities
without changing their direction. Finally, applying Corollary~\ref{cor:lieb-principal} to $L_{G_i}$ at $v_i$ gives
\begin{equation}\label{eq:coalesce-lieb}
\per(L_{G_i})\ \ge\ d_i\,\per(L_{G_i}(v_i))\qquad (i=1,2).
\end{equation}
By \eqref{eq:coalesce-lieb} and Lemma~\ref{fact:per-nonneg},
\begin{equation}\label{eq:coalesce-crossdom}
2d_1d_2\,\per(L_{G_1}(v_1))^2\,\per(L_{G_2}(v_2))^2
\ \le\
2\,\per(L_{G_1})\,\per(L_{G_2})\,\per(L_{G_1}(v_1))\,\per(L_{G_2}(v_2)).
\end{equation}
Substituting \eqref{eq:coalesce-crossdom} into \eqref{eq:coalesce-boundHad} yields
\begin{align}
\per(L_G\circ L_G)
&\le
\per(L_{G_1})^2\,\per(L_{G_2}(v_2))^2
+
\per(L_{G_1}(v_1))^2\,\per(L_{G_2})^2 \notag\\
&\qquad+
2\,\per(L_{G_1})\,\per(L_{G_2})\,\per(L_{G_1}(v_1))\,\per(L_{G_2}(v_2)).
\label{eq:coalesce-finalbound}
\end{align}
On the other hand, squaring \eqref{eq:coalesce-per-clean} gives
\begin{align}
\per(L_G)^2
&=
\bigl(\per(L_{G_1})\,\per(L_{G_2}(v_2))
+\per(L_{G_1}(v_1))\,\per(L_{G_2})\bigr)^2 \notag\\
&=
\per(L_{G_1})^2\,\per(L_{G_2}(v_2))^2
+
\per(L_{G_1}(v_1))^2\,\per(L_{G_2})^2 \notag\\
&\qquad+
2\,\per(L_{G_1})\,\per(L_{G_2})\,\per(L_{G_1}(v_1))\,\per(L_{G_2}(v_2)).
\label{eq:coalesce-perSq}
\end{align}
Comparing \eqref{eq:coalesce-finalbound} with \eqref{eq:coalesce-perSq} gives
$\per(L_G\circ L_G)\le \per(L_G)^2$.
\end{proof}

\subsection{Derived closure operations: leaf attachment and single-edge join}

We now record two simple consequences of Theorem~\ref{thm:coalescence}.
Corollary~\ref{cor:leaf} shows that attaching a leaf vertex (a vertex of degree 1) preserves the inequality, and
Corollary~\ref{cor:edge} shows that joining two graphs by a single edge also preserves it.

\begin{corollary}\label{cor:leaf}
Let $G'$ be obtained from $G$ by adding a new leaf $\ell$ adjacent to a vertex
$v\in V(G)$. Assume
\[
\per(L_G\circ L_G)\le \per(L_G)^2
\qquad\text{and}\qquad
\per\!\bigl(L_G(v)\circ L_G(v)\bigr)\le \per\!\bigl(L_G(v)\bigr)^2.
\]
Then
\[
\per(L_{G'}\circ L_{G'})\ \le\ \per(L_{G'})^2.
\]
\end{corollary}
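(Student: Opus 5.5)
We view $G'$ as a vertex coalescence and apply Theorem~\ref{thm:coalescence}. Let $G_1:=G$ with distinguished vertex $v_1:=v$. Let $G_2:=K_2$ on vertices $\{v_2,\ell\}$ with distinguished vertex $v_2$. Identifying $v_1$ with $v_2$ produces exactly $G'$: the merged vertex $v$ keeps all its edges from $G$ and gains the single edge $\{v,\ell\}$.

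The hypotheses of Theorem~\ref{thm:coalescence} for $(G_1,v_1)$ are precisely the two assumed inequalities $\per(L_G\circ L_G)\le\per(L_G)^2$ and $\per(L_G(v)\circ L_G(v))\le\per(L_G(v))^2$. It remains to verify the two hypotheses for $(G_2,v_2)$. Here
\[
L_{K_2}=\begin{pmatrix}1&-1\\-1&1\end{pmatrix},
\]
so $L_{K_2}\circ L_{K_2}=J_2$. This gives $\per(L_{K_2}\circ L_{K_2})=2$ and $\per(L_{K_2})=1\cdot 1+(-1)(-1)=2$, and indeed $2\le 4$. The principal submatrix is $L_{K_2}(v_2)=(1)$, whose Hadamard square is also $(1)$, so that inequality reads $1\le 1$.

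Alternatively, both facts follow immediately from Corollary~\ref{cor:bipartite-graphs} and Corollary~\ref{cor:bipartite-submatrices}, since $K_2$ is bipartite. With all hypotheses checked, Theorem~\ref{thm:coalescence} yields $\per(L_{G'}\circ L_{G'})\le\per(L_{G'})^2$.

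There is no real obstacle. The only point needing care is confirming that the coalescence $G\cdot K_2$ really equals $G'$, including the degree bookkeeping $\deg_{G'}(v)=\deg_G(v)+1$, which matches the block form \eqref{eq:coalesce-L} with $d_2=1$ and $c_2=(-1)$.
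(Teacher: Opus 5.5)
Your proposal is correct and is essentially the paper's proof: both realize $G'$ as the coalescence of $(G,v)$ with $(K_2,v_2)$, verify the two $K_2$ hypotheses (the paper via Corollary~\ref{cor:bipartite-graphs} and $L_{K_2}(v_2)=[1]$, you also by direct computation, $2\le 4$ and $1\le 1$), and then apply Theorem~\ref{thm:coalescence}.
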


\begin{proof}
Let $K_2$ have vertices $\{v_2,\ell\}$, and form $G'$ by coalescing $v\in V(G)$ with $v_2\in V(K_2)$.
Then $G'$ is the vertex coalescence of $(G,v)$ and $(K_2,v_2)$.
Since $K_2$ is bipartite, Corollary~\ref{cor:bipartite-graphs} gives
$\per(L_{K_2}\circ L_{K_2})\le \per(L_{K_2})^2$.
Also $L_{K_2}(v_2)=[1]$, so $\per(L_{K_2}(v_2)\circ L_{K_2}(v_2))=\per(L_{K_2}(v_2))^2=1$.
Therefore the hypotheses of Theorem~\ref{thm:coalescence} hold, and the conclusion follows.
\end{proof}

\subsubsection{A diagonal-closure lemma}
\begin{lemma}\label{lem:diag-unit}
Let $A=(a_{ij})\in\mathbb{R}^{n\times n}$ be a real positive semidefinite matrix.
Fix $i\in[n]$ and assume
\[
\per(A\circ A)\ \le\ \per(A)^2
\qquad\text{and}\qquad
\per\bigl(A(i)\circ A(i)\bigr)\ \le\ \per\bigl(A(i)\bigr)^2.
\]
Let $\alpha\ge 0$ and set $A':=A+\alpha E_{ii}$. Then
\[
\per(A'\circ A')\ \le\ \per(A')^2.
\]
\end{lemma}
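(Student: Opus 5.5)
The argument follows the same multilinearity pattern as the proof of Theorem~\ref{thm:coalescence}. Because the permanent is linear in the $i$-th column, and $A'$ differs from $A$ only by $\alpha e_i$ in that column,
\[
\per(A')=\per(A)+\alpha\,\per(A(i)).
\]
The Hadamard square satisfies $(A'\circ A')_{ii}=(a_{ii}+\alpha)^2=a_{ii}^2+2\alpha a_{ii}+\alpha^2$, and every other entry is unchanged. Note also that $(A'\circ A')(i)=A(i)\circ A(i)$. Splitting the $i$-th column once more gives
\[
\per(A'\circ A')=\per(A\circ A)+(2\alpha a_{ii}+\alpha^2)\,\per\bigl(A(i)\circ A(i)\bigr).
\]

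Next apply the two hypotheses, using $2\alpha a_{ii}+\alpha^2\ge 0$. This coefficient is nonnegative because $\alpha\ge0$ and $a_{ii}\ge 0$, the latter since $A$ is PSD. We obtain
\[
\per(A'\circ A')\le \per(A)^2+(2\alpha a_{ii}+\alpha^2)\,\per(A(i))^2 .
\]
On the other side,
\[
\per(A')^2=\per(A)^2+2\alpha\,\per(A)\per(A(i))+\alpha^2\per(A(i))^2 .
\]
Comparing the two, it remains to show
\[
2\alpha a_{ii}\,\per(A(i))^2\le 2\alpha\,\per(A)\,\per(A(i)).
\]

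This is exactly Corollary~\ref{cor:lieb-principal}, $\per(A)\ge a_{ii}\per(A(i))$, multiplied by the quantity $2\alpha\per(A(i))$. That quantity is nonnegative by Lemma~\ref{fact:per-nonneg}, because $A(i)$ is a principal submatrix of a PSD matrix. Multiplying therefore preserves the inequality, which finishes the proof.

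The main point of care is this sign bookkeeping. Both $\per(A(i))\ge0$ and $a_{ii}\ge0$ must be justified from positive semidefiniteness, so that every multiplication preserves direction. Once those signs are in place, the cross term is handled just as in \eqref{eq:coalesce-crossdom}. Note that $A'$ is also PSD, though this is not needed for the argument.
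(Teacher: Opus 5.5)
Your proof is correct and is essentially the paper's argument: you expand both $\per(A')$ and $\per(A'\circ A')$ by multilinearity in the $i$th column, apply the two hypotheses, and dominate the cross term $2\alpha a_{ii}\per(A(i))^2$ via Corollary~\ref{cor:lieb-principal}. You also check explicitly that $2\alpha a_{ii}+\alpha^2\ge 0$ and $\per(A(i))\ge 0$, which is needed so that multiplying inequalities preserves direction; the paper uses these only implicitly, so your version is slightly more careful.
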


\begin{proof}
By multilinearity of the permanent in the $i$th column,
\begin{equation}\label{eq:per-AplusE}
\per(A')=\per(A)+\alpha\,\per(A(i)).
\end{equation}
Since $A'$ differs from $A$ only at $(i,i)$, we have
\[
(A'\circ A')_{ii}=(a_{ii}+\alpha)^2=a_{ii}^2+\bigl(2a_{ii}\alpha+\alpha^2\bigr),
\]
and hence
\[
A'\circ A'=(A\circ A)+\bigl(2a_{ii}\alpha+\alpha^2\bigr)E_{ii}.
\]
Applying multilinearity in the $i$th column again yields
\begin{equation}\label{eq:per-hadamard-AplusE}
\per(A'\circ A')
=
\per(A\circ A)+\bigl(2a_{ii}\alpha+\alpha^2\bigr)\per\bigl((A\circ A)(i)\bigr).
\end{equation}
Since taking a principal submatrix commutes with Hadamard products, and by the second assumption,
\[
\per\bigl((A\circ A)(i)\bigr)
=
\per\bigl(A(i)\circ A(i)\bigr)
\le
\per(A(i))^2.
\]
Substituting into \eqref{eq:per-hadamard-AplusE} yields
\begin{equation}\label{eq:upper-diag}
\per(A'\circ A')
\le
\per(A)^2+\bigl(2a_{ii}\alpha+\alpha^2\bigr)\per(A(i))^2.
\end{equation}
By Corollary~\ref{cor:lieb-principal},
\begin{equation}\label{eq:lieb-scalar}
\per(A)\ \ge\ a_{ii}\,\per(A(i)).
\end{equation}
Using \eqref{eq:lieb-scalar} in \eqref{eq:upper-diag}, we obtain
\[
\per(A'\circ A')
\le
\per(A)^2+2\alpha\,\per(A)\per(A(i))+\alpha^2\per(A(i))^2.
\]
By \eqref{eq:per-AplusE},
\[
\per(A')^2=(\per(A)+\alpha\,\per(A(i)))^2
=
\per(A)^2+2\alpha\,\per(A)\per(A(i))+\alpha^2\per(A(i))^2.
\]
Comparing gives $\per(A'\circ A')\le \per(A')^2$.
\end{proof}

\begin{remark}
Lemma~\ref{lem:diag-unit} shows that we can increase a single diagonal entry by $\alpha\ge 0$,
provided the inequality is also known for the principal submatrix $A(i)$.
A natural next question is whether one can add an arbitrary diagonal matrix $D$:
does $\per(A\circ A)\le \per(A)^2$ imply
$
\per\bigl((A+D)\circ(A+D)\bigr)\le \per(A+D)^2\ ?
$
\end{remark}

\begin{corollary}\label{cor:edge}
Let $G$ be obtained from the disjoint union of $G_1$ and $G_2$ by adding one edge
$\{v_1,v_2\}$ with $v_1\in V(G_1)$ and $v_2\in V(G_2)$. Assume that for $i=1,2$,
\[
\per(L_{G_i}\circ L_{G_i})\le \per(L_{G_i})^2
\qquad\text{and}\qquad
\per\!\bigl(L_{G_i}(v_i)\circ L_{G_i}(v_i)\bigr)\le \per\!\bigl(L_{G_i}(v_i)\bigr)^2.
\]
Then
\[
\per(L_G\circ L_G)\ \le\ \per(L_G)^2.
\]
\end{corollary}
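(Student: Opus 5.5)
We plan to realize $G$ as two successive vertex coalescences, each handled by Theorem~\ref{thm:coalescence}, using Corollary~\ref{cor:leaf} and Lemma~\ref{lem:diag-unit} to verify the hypotheses. We take the edge $\{v_1,v_2\}$ as a copy of $K_2$. First we attach it to $G_1$: let $H_1$ be $G_1$ with a new leaf $v_2'$ adjacent to $v_1$. By Corollary~\ref{cor:leaf}, the two hypotheses on $(G_1,v_1)$ give $\per(L_{H_1}\circ L_{H_1})\le \per(L_{H_1})^2$. Then $G$ is the vertex coalescence of $(H_1,v_2')$ and $(G_2,v_2)$. It therefore suffices to check the hypotheses of Theorem~\ref{thm:coalescence} for this pair.

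For $(G_2,v_2)$ the hypotheses are exactly those assumed. For $(H_1,v_2')$ we still need the inequality for the principal submatrix $L_{H_1}(v_2')$. Deleting the leaf $v_2'$ from $L_{H_1}$ leaves $L_{G_1}$ with the $(v_1,v_1)$ entry increased by $1$. That is,
\[
L_{H_1}(v_2') = L_{G_1} + E_{v_1v_1},
\]
which agrees with \eqref{eq:laplacian-vertex-delete}.

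We then apply Lemma~\ref{lem:diag-unit} with $A=L_{G_1}$, $i=v_1$ and $\alpha=1$. Here $A$ is real PSD, and the two hypotheses of the lemma are precisely the two assumptions on $G_1$. This gives $\per(L_{H_1}(v_2')\circ L_{H_1}(v_2'))\le \per(L_{H_1}(v_2'))^2$.

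With all four hypotheses of Theorem~\ref{thm:coalescence} in place, the theorem yields $\per(L_G\circ L_G)\le\per(L_G)^2$.

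The main point needing care is identifying $L_{H_1}(v_2')$ correctly as a diagonal perturbation of $L_{G_1}$; this is what lets Lemma~\ref{lem:diag-unit} replace a hypothesis we cannot otherwise verify. We also need to confirm the identification of $G$ as a coalescence: the merged vertex receives degree $1+\deg_{G_2}(v_2)$, and its neighbors are $v_1$ together with $N_{G_2}(v_2)$, as required.
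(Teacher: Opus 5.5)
Your proposal is correct and matches the paper's proof step for step. You attach a leaf to $G_1$ at $v_1$ via Corollary~\ref{cor:leaf}, identify $L_{H_1}(v_2')=L_{G_1}+E_{v_1v_1}$ so that Lemma~\ref{lem:diag-unit} with $\alpha=1$ supplies the missing principal-submatrix hypothesis, and then coalesce the leaf with $v_2$ using Theorem~\ref{thm:coalescence}.
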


\begin{proof}
Let $H$ be obtained from $G_1$ by adding a new leaf $\ell$ adjacent to $v_1$.
By Corollary~\ref{cor:leaf}, we have $\per(L_H\circ L_H)\le \per(L_H)^2$.
Deleting $\ell$ recovers $G_1$, but attaching $\ell$ increases the degree of $v_1$ by $1$, and hence
\[
L_H(\ell)=L_{G_1}+E_{v_1v_1}.
\]
By the assumptions for $G_1$ and Lemma~\ref{lem:diag-unit} (applied to $A=L_{G_1}$ at $i=v_1$ with $\alpha=1$), we get
\[
\per(L_H(\ell)\circ L_H(\ell))\le \per(L_H(\ell))^2.
\]
Now coalesce $\ell\in V(H)$ with $v_2\in V(G_2)$. The resulting graph is $G$.
Applying Theorem~\ref{thm:coalescence} to $(H,G_2)$ completes the proof.
\end{proof}

\section{Some graph families supporting Chollet's conjecture}\label{sec:consequences}

We now record some graph classes for which the Laplacian inequality \eqref{eq:main-ineq} holds.
By Corollary~\ref{cor:bipartite-graphs}, \eqref{eq:main-ineq} holds for every \emph{bipartite} graph. We also establish the conjecture for \emph{cycles} and \emph{complete graphs}; the proofs are given in the appendix.

\begin{theorem}\label{thm:cycle}
Let $C_n$ be the cycle graph on $n\ge3$ vertices. Let $L=(\ell_{ij})$ be the Laplacian of $C_n$. Then
\[
\per(L\!\circ\! L)\ \le\ \per(L)^2.
\]
\end{theorem}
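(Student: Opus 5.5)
For even $n$ the cycle $C_n$ is bipartite, so the claim is already Corollary~\ref{cor:bipartite-graphs}. So I would treat only odd $n\ge3$. There I would compute both permanents through the same permutation expansion used in the proof of Theorem~\ref{thm:bipartite-supported}.

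\emph{Step 1: cycle-cover expansion.} Write $x_\sigma=\prod_i \ell_{i,\sigma(i)}$, where $L=2I_n-A_{C_n}$. A term $x_\sigma$ is nonzero only if every cycle of $\sigma$ is one of the following:
\begin{itemize}
\item a fixed point, contributing a factor $2$;
\item a transposition along an edge of $C_n$, contributing $(-1)^2=1$;
\item one of the two Hamiltonian orientations of $C_n$, contributing $(-1)^n$.
\end{itemize}
Directed cycles of length $3\le \ell<n$ do not exist in $C_n$. Let $m_k$ be the number of $k$-matchings of $C_n$, so $m_0=1$ and $m_1=n$, and define $S_t:=\sum_k t^{\,n-2k}m_k$. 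Then
\[
\per(L)=S_2+2(-1)^n,\qquad \per(L\circ L)=S_4+2 .
\]
For odd $n$ this becomes $\per(L)=S_2-2$, and the Hamiltonian terms are exactly where the sign trouble lies. Since $S_2\ge 2^n>2$, we have $\per(L)>0$, so it suffices to show
\[
(S_2-2)^2\ \ge\ S_4+2,
\qquad\text{i.e.}\qquad
S_2^2-S_4\ \ge\ 4S_2-2 .
\]

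\emph{Step 2: the main estimate.} I would expand $S_2^2$ and keep two kinds of terms.
\begin{itemize}
\item The diagonal terms $4^{n-2k}m_k^2$ dominate the terms of $S_4$, because $m_k^2\ge m_k$.
\item Among the cross terms, keep $2m_0m_1\,2^n2^{n-2}=n\,2^{2n-1}$.
\end{itemize}
This gives $S_2^2-S_4\ge n\,2^{2n-1}$, so it remains to check that $4S_2\le n\,2^{2n-1}$. Bound $S_2\le 2^n\sum_k m_k=2^n\,\mathcal{L}_n$, where $\mathcal{L}_n$ is the Lucas number counting all matchings of $C_n$. The needed inequality then reduces to $\mathcal{L}_n\le n\,2^{n-3}$. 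This holds for all $n\ge5$, because $\mathcal{L}_n$ grows like $\varphi^n$ with $\varphi<2$; for instance $\mathcal{L}_5=11\le 20$, and an easy induction via $\mathcal{L}_{n}=\mathcal{L}_{n-1}+\mathcal{L}_{n-2}$ extends this to all $n\ge5$.

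\emph{Step 3: small case.} It remains to check $n=3$ by hand. Here $S_2=8+3\cdot2=14$, so $\per(L)=12$. Also $\per(L\circ L)=64+3\cdot4+2=78$, and indeed $78\le144$.

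The main obstacle is Step~2. The Hamiltonian contribution $-2$ lowers $\per(L)$, so a pure sign argument as in Theorem~\ref{thm:bipartite-supported} fails. The fix is to find one explicit cross term in $\per(L)^2$, namely the empty matching paired with the single-edge matchings, that absorbs the linear loss $4S_2$. If the crude Lucas bound proved too weak for some small $n$, I would verify those finitely many cases directly.
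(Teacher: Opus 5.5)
Your proof is correct, and your Step~1 is the paper's: your $S_2,S_4$ are its $U_n,V_n$, with $\per(L)=U_n-2$ and $\per(L\circ L)=V_n+2$ for odd $n$. The two arguments differ in how they prove $S_2^2-S_4\ge 4S_2-2$.

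The paper uses the $t$-matching recurrence $m_{n,t}=m_{n-1,t}+m_{n-2,t-1}$ to get $U_n=2U_{n-1}+U_{n-2}$ and $V_n=4V_{n-1}+V_{n-2}$. It then shows that the deficit $F_n:=U_n^2-V_n-4U_n$ satisfies $F_n\ge 4F_{n-1}+F_{n-2}$. Nonnegativity then propagates from the computed values $F_3=64$ and $F_4=698$, with the even index $n=4$ serving only as an auxiliary seed.

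You argue term by term instead. Integrality ($m_k^2\ge m_k$) lets the diagonal of $S_2^2$ cover $S_4$. The single cross term $2m_0m_1\,2^n2^{n-2}=n2^{2n-1}$ then covers $4S_2$ via $S_2\le 2^n\mathcal{L}_n$.

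What each route buys:
\begin{itemize}
\item Your route needs only $m_0=1$, $m_1=n$ and a crude exponential bound on the total number of matchings. It does not need the refined recurrence or the seed values.
\item Your route also makes an explicit lower bound on the gap visible: $\per(L)^2-\per(L\circ L)\ge n2^{2n-1}-4S_2+2$. The paper records only the constant $2$, although its recursion would also give exponential growth.
\item The paper's route is more systematic. It follows the deficit through an exact recursion rather than discarding all but one well-chosen cross term.
\end{itemize}

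Two small points. First, an induction using $\mathcal{L}_n=\mathcal{L}_{n-1}+\mathcal{L}_{n-2}$ needs two base cases, so add $\mathcal{L}_6=18\le 48$. Alternatively, use $\mathcal{L}_n\le 2\mathcal{L}_{n-1}$, for which $\mathcal{L}_5$ alone suffices. Second, at $n=3$ your key estimate already holds directly, since $n2^{2n-1}=96\ge 56=4S_2$. Only the Lucas bound fails there, not your strategy, so the separate hand check is optional.
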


\begin{proof}
See Appendix~\ref{app:cycle-proof}.
\end{proof}

\begin{lemma}\label{lem:Kn}
Let $K_n$ be the complete graph on $n\ge 2$ vertices and let $L$ be its Laplacian matrix. Then
\[
\per(L\circ L)\ \le\ \per(L)^2.
\]
\end{lemma}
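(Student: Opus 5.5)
Both matrices are explicit. We have $L = nI_n - J_n$, with diagonal entries $n-1$ and off-diagonal entries $-1$. Hence $L\circ L$ has diagonal $(n-1)^2$ and off-diagonal entries $1$, so $L\circ L = ((n-1)^2-1)I_n + J_n = n(n-2)I_n + J_n$. The plan is to compute both permanents in closed form via the standard expansion for matrices of the form $aI+bJ$, and then compare them.

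\textbf{Step 1 (general formula).} For $aI_n + bJ_n$, expand the permanent over the set $F$ of fixed points of $\sigma$. Each permutation picks the diagonal value $a+b$ on $F$ and the value $b$ off $F$. Regrouping gives
\[
\per(aI_n+bJ_n)=\sum_{k=0}^n \binom{n}{k} a^k\, b^{n-k}\,(n-k)!.
\]
This follows by choosing a $k$-set $S$ where the $a$-term is selected and filling in the rest with arbitrary $b$'s; it is multilinearity along the diagonal. Equivalently, write the permanent as $\sum_{j}\binom{n}{j}(a+b)^j b^{n-j} D_{n-j}$, with $D_m$ the derangement numbers.

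\textbf{Step 2 (the two permanents).} With $a=n$, $b=-1$:
\[
\per(L)=\sum_{k=0}^n \binom{n}{k} n^k (-1)^{n-k}(n-k)!.
\]
With $a=n(n-2)$, $b=1$:
\[
\per(L\circ L)=\sum_{k=0}^n \binom{n}{k} (n(n-2))^k (n-k)!.
\]

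\textbf{Step 3 (comparison).} The right tool is to bound $\per(L\circ L)$ above by a product-type expression and $\per(L)$ below by its dominant terms. The $k=n$ term gives $n^n$ in $\per(L)$. The next terms are $-n\cdot n^{n-1}$ at $k=n-1$ and $+\binom n2 2 n^{n-2}$ at $k=n-2$. The alternating tail has ratios of successive terms bounded by $(n-k)(n-k+1)/\bigl((n-k+?)\,n\bigr)$, which are small, so $\per(L) \ge n^n - n^n + (n-1)n^{n-1} - \cdots$, roughly $\per(L)\approx n!\sum_j (-1)^j n^{n-j}/(n-j)!\cdot\ldots$.

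A cleaner route is to use $\per(L)\ge \prod$ bounds. By Corollary~\ref{cor:lieb-principal} iterated, $\per(L)\ge (n-1)\per(L(i))$, and $L(i)=nI_{n-1}-J_{n-1}$ is positive definite with diagonal $n-1$. I would instead estimate directly. For the upper bound, $\per(L\circ L)=\sum_k \binom nk (n(n-2))^k(n-k)! \le (n(n-2))^n\sum_{m\ge0} \frac{n^m}{m!\,(n(n-2))^m}\cdot$ (using $\binom{n}{k}(n-k)!\le n^{m}$ with $m=n-k$), which is at most $(n(n-2))^n e^{1/(n-2)}$. For the lower bound, pair consecutive terms of the alternating sum for $\per(L)$ to get $\per(L)\ge n^n e^{-1}\cdot c$, using the $aI+bJ$ form written as $\sum_j \binom nj (n-1)^j(-1)^{n-j}D_{n-j}$ with all but the top few terms controlled. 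Then I would check that $n^{2n}e^{-2}c^2 \ge n^n(n-2)^n e^{1/(n-2)}$, i.e.\ $(n/(n-2))^n \ge e^{2+1/(n-2)}/c^2$. The left side tends to $e^2$ from above, since $(1+2/(n-2))^n > e^2$. So the comparison must be done carefully with exact second-order terms rather than crude constants.

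\textbf{Main obstacle.} The asymptotic margin is thin, since both sides are $\sim e^{\pm}$-scaled $n^{2n}$-type quantities. I would handle small $n$ (say $n\le 6$) by direct computation from the formulas; for example $n=2$ gives $\per(L)=2$ and $\per(L\circ L)=2$, and $n=3$ gives $\per(L)=12$ and $\per(L\circ L)=\per(3I+J)$. For large $n$ I would derive precise two-sided bounds: $\per(L)=n!\sum_{m}(-1)^m n^{n-m}/((n-m)!\,m!)\cdot\ldots$, tracking the $1/n$ correction terms exactly. I expect this sharp asymptotic comparison to be the hard step.
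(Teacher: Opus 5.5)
Your Steps 1--2 are correct and agree with the paper's closed forms: $\per(L)=\sum_{k}\frac{n!}{k!}n^k(-1)^{n-k}=(-1)^n n!\,T_n$ with $T_n=\sum_{k=0}^{n}(-n)^k/k!$, and $\per(L\circ L)=n!\sum_{k=0}^{n}(n(n-2))^k/k!$. The gap is Step~3, which is the whole content of the lemma and is not carried out. Of the two estimates you sketch there, one is false and the other is too weak.

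For the upper bound, the factor $1/m!$ is spurious. Your own inequality $\binom nk(n-k)!=n!/(n-m)!\le n^m$ yields
\[
\per(L\circ L)\le (n(n-2))^n\sum_{m\ge0}(n-2)^{-m}=\tfrac{n-2}{n-3}\,(n(n-2))^n \qquad(n\ge 4),
\]
which is exactly the paper's bound. Your claimed $(n(n-2))^n e^{1/(n-2)}$ is false. At $n=4$, $\per(L\circ L)=7128>4096\,e^{1/2}\approx 6753$. It also fails for large $n$, since the $m=2$ coefficient $\frac{n-1}{n(n-2)^2}$ of the exact sum already exceeds the coefficient $\frac{1}{2(n-2)^2}$ in $e^{1/(n-2)}$.

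For the lower bound, you never determine $c$, and pairing consecutive terms cannot supply it. The magnitudes $\frac{n!}{k!}n^k$ increase in $k$ with ratios $n/(k+1)$ close to $1$ near the top, and the last two are equal and cancel, so the sum is governed by heavy cancellation. The baseline $n^n e^{-1}$ you have in mind is the iterated Lieb bound $\per(L)\ge (n-1)^n$, and it is genuinely insufficient: at $n=4$, $(n-1)^{2n}=6561<7128$.

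The missing idea is a sharp lower bound $|T_n|\ge \frac{n^{n-2}}{2(n-2)!}$, equivalently $\per(L)\ge \frac{(n-1)n^{n-1}}{2}\approx n^n/2$. The paper obtains it in three moves: cancel the last two terms of $T_n$, apply Taylor's theorem with integral remainder to $e^{-x}$ at $x=n$, and bound the remainder integral below using $e^{-nt}\ge(1-t)^n$.

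With this in hand, your concern that the margin is thin disappears: $\per(L)^2/\per(L\circ L)\to e^2/4\approx 1.85$. Crude constants then close the argument for $n\ge 8$; the paper reduces it to $(n/(n-2))^{n-2}\ge 4\ge 4(n-2)^3/((n-3)(n-1)^2)$. The cases $2\le n\le 7$ are checked directly.
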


\begin{proof}
See Appendix~\ref{app:clique}.
\end{proof}

\begin{lemma}\label{lem:clique-minor}
Let $K_n$ be the complete graph on $n\ge 2$ vertices, let $v\in V(K_n)$, and set $B:=L_{K_n}(v)$. Then
\[
\per(B\circ B)\ \le\ \per(B)^2.
\]
\end{lemma}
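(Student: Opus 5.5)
The plan is to compute both permanents in closed form and compare the resulting sums, as for Lemma~\ref{lem:Kn}. Theorem~\ref{thm:bipartite-supported} does not apply, since the support graph of $B$ is $K_{n-1}$, which is not bipartite for $n\ge 4$. For $n\le 3$, $B$ is a $1\times1$ or $2\times 2$ matrix and the inequality is checked directly (for $n=3$: $\per(B)=5$ and $\per(B\circ B)=17\le 25$).

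\textbf{Closed forms.} Set $m:=n-1$. Then
\[
B=(n-1)I_m - A_{K_m}, \qquad B\circ B=(n-1)^2 I_m + A_{K_m}.
\]
Expanding the permanent over the set $S$ of non-fixed points of $\sigma$ gives a sum over derangements of $S$. Let $D_k$ be the number of derangements of $[k]$. A derangement of $S$ with $|S|=k$ contributes $(-1)^k$ in $B$, because every off-diagonal entry equals $-1$, and contributes $1$ in $B\circ B$. Hence
\[
\per(B)=\sum_{k=0}^{m}\binom{m}{k}(n-1)^{m-k}(-1)^k D_k,\qquad
\per(B\circ B)=\sum_{k=0}^{m}\binom{m}{k}(n-1)^{2(m-k)} D_k .
\]
Write $q:=n-1=m$ and $P:=\sum_k \binom{m}{k} q^{m-k}D_k=\per(qI+A_{K_m})$. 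Then $\per(B\circ B)$ is the same sum with $q^{m-k}$ replaced by $q^{2(m-k)}$.

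\textbf{Comparison.} I would square $\per(B)$ and compare it with $\per(B\circ B)$ term by term in powers of $q$. Since $D_1=0$, the terms $k=0,2,3$ give
\[
\per(B)=q^m+\binom{m}{2}q^{m-2}-2\binom{m}{3}q^{m-3}+\cdots,
\]
so
\[
\per(B)^2=q^{2m}+2\binom{m}{2}q^{2m-2}-4\binom m3 q^{2m-3}+\cdots .
\]
On the other side,
\[
\per(B\circ B)=q^{2m}+\binom m2 q^{2m-4}+2\binom m3 q^{2m-6}+\cdots .
\]
Thus $\per(B)^2$ exceeds $\per(B\circ B)$ already at order $q^{2m-2}$. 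The remaining work is to show that the tail terms cannot overturn this gap.

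\textbf{Main obstacle: controlling the alternating tail.} I will prove the lower bound
\[
\per(B)\ge q^m+\tbinom m2 q^{m-2}-2\tbinom m3 q^{m-3}-\cdots,
\]
in the strong form $\per(B)\ge q^m$. I first try the elementary route: bound $|\text{tail}|$ using $D_k\le k!$ and $\binom mk k!\le m^k$. Since $q=m$, each term of $\per(B)$ is then bounded by $q^m\cdot D_k/k!$. This crude bound is not sufficient, because the ratio is not small. So I will instead pair consecutive terms $k$ and $k+1$, using $D_{k+1}=kD_k+(-1)^{k+1}$.

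As a fallback, I will use Corollary~\ref{cor:lieb-principal} iterated along the diagonal, which gives $\per(B)\ge \prod_i b_{ii}=q^m$ because $B\succeq0$ (this is the permanental analogue of Hadamard's inequality). Then $\per(B)^2\ge q^{2m}$. On the other side,
\[
\per(B\circ B)=\sum_k\binom mk q^{2(m-k)}D_k\le q^{2m}\sum_k \frac{m^k}{q^{2k}}=q^{2m}\sum_k q^{-k},
\]
which only gives a bound of $q^{2m}(1+O(1/q))$ and is not tight enough. To close the gap I will combine a sharper bound of the form $\per(B)^2\ge q^{2m}+m(m-1)q^{2m-2}-(\text{explicit error})$ with $\per(B\circ B)\le q^{2m}+\binom m2 q^{2m-4}\cdot(1+o(1))$. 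The choice of this sharper lower bound on $\per(B)$ is the delicate step. If the asymptotic estimates only hold for large $n$, the finitely many small $n$ will be verified by direct evaluation of the two closed-form sums.
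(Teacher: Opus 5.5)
Your derangement closed forms are correct; they agree with the paper's $\per(B)=(-1)^m m!\sum_{r\le m}(-n)^r/r!$ after regrouping by fixed points. You also correctly see that the bound $\per(B)\ge q^m$ from iterating Corollary~\ref{cor:lieb-principal} cannot suffice, since $\per(B\circ B)\ge q^{2m}+\binom m2 q^{2m-4}>q^{2m}$.

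\textbf{The gap.} The proposal stops exactly at the step that carries the content: you never prove a lower bound on $\per(B)$ that beats $q^m$. You defer it as ``the delicate step.'' The closing plan (an unspecified ``explicit error,'' a $1+o(1)$ factor, and an unspecified threshold below which cases are checked by hand) is not yet an argument.

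Your guiding heuristic is also off. Because $q=m$, we have $\binom mk\approx q^k/k!$, so for each fixed $k\ge 2$ the term $\binom mk q^{m-k}D_k$ is asymptotic to $\frac{D_k}{k!}q^m$. Every such term is of the same order as the leading term. Thus $2\binom m2 q^{2m-2}\approx q^{2m}$ is not a lower-order correction, and the alternating tail is as large as the ``gap'' it must not overturn. Indeed, $\per(B)/q^m\to e/2$.

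Finally, the recurrence you plan to use is wrong. It is $D_{k+1}=(k+1)D_k+(-1)^{k+1}$, not $kD_k+(-1)^{k+1}$; yours gives $D_3=1$.

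\textbf{How to close it.} With the correct recurrence, your pairing idea works. Put $c_k:=\binom mk q^{m-k}D_k$ (with $c_k=0$ for $k>m$), so $\per(B)=\sum_k(-1)^k c_k$ and $c_1=0$. For $j\ge 1$ and $2j+1\le m$,
\[
\frac{c_{2j+1}}{c_{2j}}=\frac{m-2j}{m}\Bigl(1-\frac{1}{(2j+1)D_{2j}}\Bigr)<1 .
\]
Hence every bracket in $\per(B)=c_0+(c_2-c_3)+(c_4-c_5)+\cdots$ is nonnegative, and
\[
\per(B)\ \ge\ c_0+c_2-c_3\ =\ q^m\Bigl(1+\frac{(m-1)(m+4)}{6m^2}\Bigr).
\]
On the other side, $D_1=0$ and $\binom mk D_k\le m^k=q^k$ give
\[
\per(B\circ B)\le q^{2m}\Bigl(1+\sum_{k\ge 2}q^{-k}\Bigr)=q^{2m}\Bigl(1+\frac{1}{q(q-1)}\Bigr).
\]
Using $(1+x)^2\ge 1+2x$, it then suffices that $(m-1)^2(m+4)\ge 3m$, which holds for all $m\ge 2$. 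Together with the trivial case $n=2$, this proves the lemma for every $n$ with no numerical checks.

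\textbf{Comparison with the paper.} The paper works with the fixed-point form, where $\sum_{r\le m}(-n)^r/r!$ is a truncated series for $e^{-n}$ with heavy cancellation. It bounds $\per(B\circ B)$ by its top term times $\frac{n-2}{n-3}$. It lower-bounds $|\per(B)|$ via Taylor's theorem with integral remainder and $(1-t)^n\le e^{-nt}$, and it verifies $2\le n\le 7$ by direct computation. The paired derangement expansion would be the more elementary and uniform route, but as submitted your proposal does not carry it out.
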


\begin{proof}
See Appendix~\ref{app:clique}.
\end{proof}

A graph is \emph{unicyclic} if it is connected and contains exactly one cycle.

\begin{corollary}[Unicyclic graphs]\label{cor:unicyclic}
If $G$ is unicyclic, then
\[
\per(L_G\circ L_G)\ \le\ \per(L_G)^2.
\]
\end{corollary}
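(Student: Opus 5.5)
The plan is to argue by induction on $|V(G)|$, using the structure of unicyclic graphs. A unicyclic graph $G$ consists of a unique cycle $C_k$ ($k\ge 3$) together with trees hanging off the cycle vertices. If $G=C_k$ itself, the inequality is exactly Theorem~\ref{thm:cycle}, and this is the base case. Otherwise, some cycle vertex $v$ has a neighbor $u$ not on the cycle.

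For the inductive step, decompose $G$ as a vertex coalescence at $v$. Let $T$ be the subgraph induced by $v$ together with all vertices reachable from $v$ without using cycle edges, i.e.\ the tree hanging at $v$. Let $G_1$ be obtained from $G$ by deleting $V(T)\setminus\{v\}$. Then $G_1$ is still unicyclic, since it keeps the whole cycle and all other hanging trees. It has strictly fewer vertices because $u\in V(T)\setminus\{v\}$. Moreover $G=G_1\cdot T$, coalescing at $v\in V(G_1)$ and $v\in V(T)$. We then verify the four hypotheses of Theorem~\ref{thm:coalescence}:
\begin{itemize}
\item $\per(L_{G_1}\circ L_{G_1})\le \per(L_{G_1})^2$ holds by the induction hypothesis.
\item $\per(L_T\circ L_T)\le\per(L_T)^2$ holds by Corollary~\ref{cor:bipartite-graphs}, since trees are bipartite.
\item $\per(L_T(v)\circ L_T(v))\le \per(L_T(v))^2$ holds by Corollary~\ref{cor:bipartite-submatrices}.
\item The remaining hypothesis concerns $L_{G_1}(v)$, treated next.
\end{itemize}

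For $L_{G_1}(v)$, we cannot use Corollary~\ref{cor:bipartite-submatrices} directly, because $G_1$ contains the odd-or-even cycle $C_k$ and need not be bipartite. Instead, apply Theorem~\ref{thm:bipartite-supported} to $A:=L_{G_1}(v)$. This matrix is symmetric, has nonnegative diagonal entries (degrees in $G_1$), and has off-diagonal entries in $\{0,-1\}$, so it is a $Z$-matrix. Its support graph is exactly $G_1-v$. Since $v$ lies on the unique cycle of $G_1$, deleting it destroys that cycle, so $G_1-v$ is a forest and hence bipartite. Thus Theorem~\ref{thm:bipartite-supported} gives $\per(L_{G_1}(v)\circ L_{G_1}(v))\le\per(L_{G_1}(v))^2$. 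Theorem~\ref{thm:coalescence} then yields the inequality for $G$, completing the induction.

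The only genuinely delicate point is the hypothesis on $L_{G_1}(v)$. The key observation is that the coalescence vertex must be chosen on the cycle, so that removing it leaves a forest. Choosing the tree $T$ hanging at a cycle vertex, rather than, say, peeling off an arbitrary leaf via Corollary~\ref{cor:leaf}, guarantees this. If one preferred leaf-peeling, the deleted vertex might lie off the cycle, and $G-v$ would then still contain the cycle. The rest of the argument is a routine structural decomposition.
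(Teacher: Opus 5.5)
Your proof is correct and follows the paper's route: you build $G$ from the cycle by coalescing the hanging trees at cycle vertices via Theorem~\ref{thm:coalescence}. As in the paper, Theorem~\ref{thm:cycle} gives the base case, the bipartite corollaries handle the trees, and Theorem~\ref{thm:bipartite-supported} handles the principal submatrix at the cycle vertex. Your inductive formulation is slightly more careful than the paper's sketch: at later attachment steps, deleting the cycle vertex from $G_1$ leaves a forest rather than just a path, and that forest is what the hypothesis on $L_{G_1}(v)$ actually requires.
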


\begin{proof}
Let $C$ be the unique cycle of $G$. A unicyclic graph is exactly a cycle with (possibly empty) trees attached to some of its cycle
vertices. We start from $C$ and attach these trees.
The inequality holds for the starting cycle $C$ by Theorem~\ref{thm:cycle}. Each attached tree is bipartite, so the inequality (and
the needed principal-submatrix inequalities) hold for it by Corollary~\ref{cor:bipartite-graphs} and Corollary~\ref{cor:bipartite-submatrices}.
On the cycle side, deleting any attachment vertex from $C$ leaves a path, hence a bipartite support graph; therefore the required
principal-submatrix inequality for the cycle Laplacian follows from Theorem~\ref{thm:bipartite-supported}.
Thus every attachment step satisfies the hypotheses of Theorem~\ref{thm:coalescence}, and iterating the coalescence closure yields
\eqref{eq:main-ineq} for $G$.
\end{proof}

\begin{corollary}[One-vertex unions]\label{cor:bouquets}
Let $H_1,\dots,H_t$ be connected graphs and choose vertices $v_i\in V(H_i)$.
Let $G$ be obtained by taking the disjoint union of the $H_i$ and identifying all $v_i$ into a single vertex (a \emph{one-vertex union}).
Assume for each $i$ that
\[
\per(L_{H_i}\circ L_{H_i})\le \per(L_{H_i})^2
\qquad\text{and}\qquad
\per\!\bigl(L_{H_i}(v_i)\circ L_{H_i}(v_i)\bigr)\le \per\!\bigl(L_{H_i}(v_i)\bigr)^2.
\]
Then
\[
\per(L_G\circ L_G)\ \le\ \per(L_G)^2.
\]
\end{corollary}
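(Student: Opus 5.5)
Induction on $t$, applying Theorem~\ref{thm:coalescence} once per step. For $t=1$ the graph is $G=H_1$ and the claim is the first hypothesis. For $t\ge 2$, let $G_{k}$ be the one-vertex union of $H_1,\dots,H_k$ at the merged vertex $w_k$. Then $G_{k+1}$ is the vertex coalescence of $(G_k,w_k)$ and $(H_{k+1},v_{k+1})$, with $w_{k+1}$ the merged vertex.

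To apply Theorem~\ref{thm:coalescence} at step $k+1$ we need four inequalities. Two concern $H_{k+1}$: the full inequality and the one for $L_{H_{k+1}}(v_{k+1})$, and both are given by hypothesis. The third is the full inequality for $G_k$, which is the induction hypothesis. The fourth is the principal-submatrix inequality for $L_{G_k}(w_k)$, so the induction must carry this as a second statement: $\per(L_{G_k}(w_k)\circ L_{G_k}(w_k))\le \per(L_{G_k}(w_k))^2$.

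This second statement is where the real content lies, and it turns out to be easy. Deleting the merged vertex $w_k$ disconnects the pieces. Order the vertices as $U_1,\dots,U_k$ with $U_i=V(H_i)\setminus\{v_i\}$. Then $L_{G_k}(w_k)$ is block diagonal with blocks $L_{H_i}(v_i)$. This holds because each vertex of $U_i$ keeps its degree from $H_i$: its edge to the merged vertex still counts toward its diagonal entry, and there are no edges between different $U_i$. The Hadamard square of a block-diagonal matrix is block diagonal with the Hadamard-squared blocks. The permanent of a block-diagonal matrix is the product of the block permanents, which follows for example from Lemma~\ref{lem:zero-submatrix-per} applied to any permutation mixing blocks, or directly.

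Hence
\[
\per\bigl(L_{G_k}(w_k)\circ L_{G_k}(w_k)\bigr)=\prod_{i=1}^k \per\bigl(L_{H_i}(v_i)\circ L_{H_i}(v_i)\bigr)\le \prod_{i=1}^k \per\bigl(L_{H_i}(v_i)\bigr)^2=\per\bigl(L_{G_k}(w_k)\bigr)^2 .
\]
Multiplying the inequalities is legitimate because every factor is nonnegative. On the left this is because each $L_{H_i}(v_i)\circ L_{H_i}(v_i)$ is PSD by the Schur product theorem and so has nonnegative permanent by Lemma~\ref{fact:per-nonneg}. On the right the factors are squares.

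The only point needing care is the identification of $L_{G_k}(w_k)$ as block diagonal with exactly the blocks $L_{H_i}(v_i)$, including the diagonal entries. Once that is checked, the induction closes and yields $\per(L_G\circ L_G)\le\per(L_G)^2$ for $G=G_t$.
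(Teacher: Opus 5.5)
Your proposal is correct and takes the paper's route: build $G$ by iterated vertex coalescence and apply Theorem~\ref{thm:coalescence} at each step. Where the paper's proof only asserts that the inequalities for the already constructed graph ``hold by induction,'' you also verify the merged-vertex principal-submatrix hypothesis through the block decomposition $L_{G_k}(w_k)=\bigoplus_{i\le k}L_{H_i}(v_i)$, and that is exactly the step the paper's induction needs but does not spell out.
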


\begin{proof}
Build $G$ by repeated vertex coalescence: start from $H_1$, then glue $H_2$ at $v_2$, then glue $H_3$ at $v_3$, and so on.
At each step, Theorem~\ref{thm:coalescence} applies: the needed inequalities for the newly added piece are assumed, and the inequalities
for the already constructed graph hold by induction. Hence \eqref{eq:main-ineq} is preserved at every step, and therefore holds
for the final graph $G$.
\end{proof}
The hypotheses in Corollary~\ref{cor:bouquets} are automatically satisfied when each $H_i$ belongs to one of the graph families covered
earlier (bipartite graphs, cycles, or complete graphs). This includes standard families such as \emph{bouquets (roses) of cycles}
(one-vertex unions of cycles), \emph{windmill graphs} (one-vertex unions of cliques), and \emph{friendship graphs}
(one-vertex unions of copies of triangles).
Moreover, we can combine these building blocks further using the single-edge join closure (Corollary~\ref{cor:edge}).
In particular, this covers \emph{block graphs}, i.e., graphs whose blocks (maximal 2-connected components) are cliques arranged in a
tree-like way via cut vertices.
More generally, it also covers “trees of blocks” where each block is itself a covered graph (such as a bipartite graph, a cycle, a
clique, or a one-vertex union of these), and the blocks are connected by single-edge joins.

\section{Discussion and open problems}\label{sec:discussion}

We proved \eqref{eq:main-ineq} for several base graph families and showed it is preserved under vertex coalescence under additional principal-submatrix hypotheses (Theorem~\ref{thm:coalescence}). The methods raise some natural questions. 
\begin{itemize}
    \item If $A\succeq 0$ is real symmetric and satisfies only $\per(A\circ A)\le \per(A)^2$, must the inequality also hold for $A+D$ for all diagonal matrix $D$?
If not, characterize additional conditions (graphical, spectral, or combinatorial) that make diagonal closure valid.
    \item If \eqref{eq:main-ineq} holds for $G_1$ and $G_2$, must it hold for $G_1\cdot G_2$ without assuming the corresponding inequalities for principal submatrix $L_{G_1}(v_1)$ and $L_{G_2}(v_2)$? 
    \item Our analysis shows that the inequality is strict for odd cycles (Remark~\ref{rem:cycle-gap}). It would be interesting to characterize graphs $G$ for which equality holds in \eqref{eq:main-ineq}, and give lower bounds on $\per(L_G)^2-\per(L_G\circ L_G)$ in terms of simple structural parameters like cut vertices/blocks, the presence and parity of cycles or counts of matchings/cycle-covers.

\end{itemize}

\bibliographystyle{plain}
\bibliography{references}

\appendix
\section{Cycles}\label{app:cycle-proof}

For the cycle $C_n$ on $n\ge 3$ vertices we denote the number of matchings of size~$t$ by $m_{n,t}$.
We will use the classical recurrence; see~\cite{LovaszPlummer1986}.

\begin{lemma}\label{lem:cycle-matching-rec}
For $n\ge 4$ and $t\ge 1$,
\[
m_{n,t} \;=\; m_{n-1,t} \;+\; m_{n-2,t-1},
\]
with conventions $m_{k,\ell}=0$ for $\ell<0$ or $\ell>\lfloor k/2\rfloor$, and $m_{n,0}=1$ for all $n$.
\end{lemma}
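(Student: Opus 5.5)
We prove the recurrence by the standard deletion argument, splitting on one distinguished edge, except that deleting an edge of $C_n$ yields a path rather than a cycle. The combinatorial content of $m_{n,t}$ (the number of $t$-edge matchings of $C_n$) is therefore best handled through paths. Let $p_{k,t}$ denote the number of $t$-matchings of the path $P_k$ on $k$ vertices. By the usual end-vertex argument, $p_{k,t}=p_{k-1,t}+p_{k-2,t-1}$ for $k\ge 2$: a matching of $P_k$ either avoids the last vertex, giving a matching of $P_{k-1}$, or uses the last edge, giving a $(t-1)$-matching of $P_{k-2}$.

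Next we express cycle matchings via paths. Fix a vertex $x$ of $C_n$ with incident edges $e,f$. A $t$-matching of $C_n$ either avoids $x$, contributing $p_{n-1,t}$, or uses exactly one of $e,f$. In the latter case, removing that edge and its two endpoints leaves $P_{n-2}$, so each of $e$ and $f$ contributes $p_{n-2,t-1}$. Hence
\[
m_{n,t}=p_{n-1,t}+2\,p_{n-2,t-1}\qquad(n\ge3).
\]
We would also record the equivalent form $m_{n,t}=p_{n,t}+p_{n-2,t-1}$, obtained by splitting instead on whether a single edge $e$ is used: deleting $e$ leaves $P_n$, and using $e$ leaves $P_{n-2}$.

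Finally we verify the recurrence. Using the second form and the path recurrence,
\[
m_{n-1,t}+m_{n-2,t-1}=p_{n-1,t}+p_{n-3,t-1}+p_{n-2,t-1}+p_{n-4,t-2}.
\]
Collapsing the pairs with the path recurrence gives $p_{n-1,t}+p_{n-2,t-1}=p_{n,t}$ and $p_{n-3,t-1}+p_{n-4,t-2}=p_{n-2,t-1}$. The sum therefore equals $p_{n,t}+p_{n-2,t-1}=m_{n,t}$, as required.

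The main obstacle is the boundary cases. The identity $m_{k,t}=p_{k,t}+p_{k-2,t-1}$ needs $k\ge3$, and we apply it at $k=n-2$, so for $n=4$ the term $m_{2,t-1}$ must be interpreted. The lemma's conventions do not define $C_2$, so we would read $m_{2,\cdot}$ via the formula, that is, as the multigraph $2$-cycle with $m_{2,1}=2$. For $n=4$, $t=2$ we would check directly that $m_{4,2}=2=m_{3,2}+m_{2,1}$ only if $m_{2,1}=1$. This suggests the intended convention treats $C_2$ as $K_2$, and that a careful check of small cases is needed here, possibly restricting the recurrence to $n\ge5$ or fixing $m_{2,1}$ appropriately. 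Similarly, the path recurrence with $p_{0,0}=p_{1,0}=1$ and $p_{k,\ell}=0$ for $\ell<0$ handles $t=1$.
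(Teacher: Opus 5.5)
The paper gives no proof of this lemma; it only cites the recurrence as classical (Lov\'asz--Plummer). Your argument is the standard derivation: split on one edge to get $m_{k,t}=p_{k,t}+p_{k-2,t-1}$ for $k\ge 3$, then collapse the four path terms with $p_{k,t}=p_{k-1,t}+p_{k-2,t-1}$. It is complete and correct for $n\ge 5$, where $n-1,n-2\ge 3$. That is also the only range the paper needs, since it uses the lemma solely to derive $U_n=2U_{n-1}+U_{n-2}$ and $V_n=4V_{n-1}+V_{n-2}$ for $n\ge 5$.

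Your treatment of the boundary case $n=4$, however, is wrong. $C_3$ has no two disjoint edges, so $m_{3,2}=0$, while $m_{4,2}=2$. Hence $m_{4,2}=m_{3,2}+m_{2,1}$ holds if and only if $m_{2,1}=2$, not $1$. Your conclusion that the intended convention treats $C_2$ as $K_2$ is therefore backwards: with $m_{2,1}=1$ the stated recurrence would be false at $(n,t)=(4,2)$. The correct reading is the one you first wrote down, $m_{2,s}:=p_{2,s}+p_{0,s-1}$. This is the doubled-edge $2$-cycle, giving $m_{2,0}=1$ and $m_{2,1}=2$, which agrees with the closed form $\frac{n}{n-t}\binom{n-t}{t}$. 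With that definition your collapsing computation goes through unchanged at $n=4$, because it only uses the path recurrence at $k=4$ and $k=2$, both valid given $p_{0,0}=p_{1,0}=1$. The lemma's own conventions leave $m_{2,1}$ undefined (note $1=\lfloor 2/2\rfloor$). So you should either state this definition explicitly or restrict the claim to $n\ge 5$. The case $n=4$, $t=1$ is fine as it stands via $m_{2,0}=1$.
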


\begin{proof}[Proof of Theorem~\ref{thm:cycle}]
If $n$ is even, then $C_n$ is bipartite and the result follows from
Corollary~\ref{cor:bipartite-graphs}. Hence assume $n$ is odd.
Label the vertices of $C_n$ by $V(C_n)=[n]:=\{1,2,\dots,n\}$, with edges
$\{i,i+1\}$ for $1\le i\le n-1$ and $\{n,1\}$. Let $L$ be the Laplacian of $C_n$. Then
\[
\ell_{ij}=\begin{cases}
2,& i=j,\\
-1,& \{i,j\}\in E(C_n),\\
0,&\text{otherwise,}
\end{cases}
\qquad
(L\circ L)_{ij}=\ell_{ij}^2=\begin{cases}
4,& i=j,\\
1,& \{i,j\}\in E(C_n),\\
0,&\text{otherwise.}
\end{cases}
\]
For $i\in[n]$, define the neighborhood of $i$ in $C_n$ by
\[
N(i):=\{\,j\in[n]: \{i,j\}\in E(C_n)\,\}.
\]

A permutation $\sigma\in S_n$ contributes a nonzero term if and only if
$\ell_{i,\sigma(i)}\neq0$ for all $i$, equivalently $\sigma(i)\in\{i\}\cup N(i)$ for all $i$.
Thus $\sigma$ is a directed cycle cover of the digraph obtained from $C_n$ by adding a loop at each vertex.

Since each vertex has exactly two neighbors, any directed cycle using off-diagonal edges must follow the underlying
cycle. Hence the only directed cycles of length $\ge 2$ are: (a) $2$-cycles $(i\,j)$ along an edge $\{i,j\}$, and
(b) the two directed $n$-cycles that traverse all vertices (the two orientations of $C_n$).
Therefore every contributing permutation is either (i) a product of disjoint transpositions on edges and fixed points,
or (ii) one of the two oriented $n$-cycles.

In case (ii), each oriented $n$-cycle uses $n$ off-diagonal entries, each equal to $-1$,
so its weight is $(-1)^n=-1$ (since $n$ is odd). Thus the two orientations contribute $-2$ to $\per(L)$.
For $L\circ L$, the same permutations contribute $1$ each, hence $+2$.

In case (i), the transpositions form a matching $M$ in $C_n$. If $|M|=t$, then there are $n-2t$ fixed points.
The weight of such a permutation in $\per(L)$ equals $((-1)^2)^t\cdot 2^{\,n-2t}=2^{\,n-2t}$,
while in $\per(L\circ L)$ it equals $1^t\cdot 4^{\,n-2t}=4^{\,n-2t}$.
Let $m_{n,t}$ be the number of size-$t$ matchings in $C_n$, and set
\[
U_n:=\sum_{t\ge0} m_{n,t}\,2^{\,n-2t},\qquad
V_n:=\sum_{t\ge0} m_{n,t}\,4^{\,n-2t}.
\]
Then
\begin{equation}\label{eq:cycle-per-ST}
\per(L)=U_n-2,\qquad \per(L\circ L)=V_n+2.
\end{equation}

\smallskip
\noindent It remains to show that for all $n\ge3$,
\begin{equation}\label{eq:cycle-ineq-ST}
U_n^2-V_n\ \ge\ 4U_n.
\end{equation}

\smallskip
\noindent\textbf{Proof of \eqref{eq:cycle-ineq-ST}.}
Using $m_{n,t}=m_{n-1,t}+m_{n-2,t-1}$ (Lemma~\ref{lem:cycle-matching-rec}),
multiplying by $2^{\,n-2t}$ and summing over $t$, we obtain for $n\ge5$,
\[
U_n=2U_{n-1}+U_{n-2}.
\]
The same argument with $4^{\,n-2t}$ gives
\[
V_n=4V_{n-1}+V_{n-2}.
\]
From matchings of $C_3,C_4$,
\begin{equation}\label{eq:cycle-init}
U_3=14,\ U_4=34,\qquad V_3=76,\ V_4=322.
\end{equation}

Define
\[
F_n:=U_n^2-V_n-4U_n.
\]
From the recurrences above, for $n\ge5$,
\begin{align*}
F_n
&=(2U_{n-1}+U_{n-2})^2-(4V_{n-1}+V_{n-2})-4(2U_{n-1}+U_{n-2})\\
&=\Bigl(4U_{n-1}^2+4U_{n-1}U_{n-2}+U_{n-2}^2\Bigr)
-\Bigl(4V_{n-1}+V_{n-2}\Bigr)-\Bigl(8U_{n-1}+4U_{n-2}\Bigr)\\
&=4\bigl(U_{n-1}^2-V_{n-1}-4U_{n-1}\bigr)
+\bigl(U_{n-2}^2-V_{n-2}-4U_{n-2}\bigr)
+4U_{n-1}U_{n-2}+8U_{n-1}\\
&=4F_{n-1}+F_{n-2}+\bigl(4U_{n-1}U_{n-2}+8U_{n-1}\bigr)
\ \ge\ 4F_{n-1}+F_{n-2},
\end{align*}
since $U_k>0$ for all $k\ge3$.
By \eqref{eq:cycle-init},
\[
F_3=14^2-76-4\cdot14=64>0,\qquad
F_4=34^2-322-4\cdot34=698>0,
\]
so $F_n\ge0$ for all $n\ge3$ by strong induction. Hence $U_n^2-V_n \ge 4U_n$ for all $n\ge3$.

\smallskip
\noindent Finally, by \eqref{eq:cycle-per-ST},
\[
\per(L\circ L)=V_n+2,\qquad \per(L)=U_n-2,
\]
and therefore
\[
\per(L)^2-\per(L\circ L)
=(U_n-2)^2-(V_n+2)
=\bigl(U_n^2-V_n-4U_n\bigr)+2
\ \ge\ 2,
\]
where the last inequality uses \eqref{eq:cycle-ineq-ST}. This proves $\per(L\circ L)\le \per(L)^2$.
\end{proof}

\begin{remark}\label{rem:cycle-gap}
In fact, the proof shows a strict gap for odd cycles: if $n\ge 3$ is odd and $L$ is the Laplacian of $C_n$, then
\[
\per(L)^2-\per(L\circ L)\ge 2.
\]
\end{remark}

\section{Complete graphs}\label{app:clique}

\subsection{Computing permanent of $nI_s-J_s$}\label{app:clique:MIJ}

\begin{lemma}\label{lem:nIminusJ-per}
Let $n\ge 2$ and $s\ge 1$ be integers, and let
\[
M:=nI_s-J_s.
\]
Then
\begin{equation}\label{eq:MIJ-per}
\per(M)=(-1)^s\,s!\sum_{r=0}^{s}\frac{(-n)^r}{r!},
\end{equation}
and
\begin{equation}\label{eq:MIJ-perHad}
\per(M\circ M)=s!\sum_{r=0}^{s}\frac{\bigl(n(n-2)\bigr)^r}{r!}.
\end{equation}
\end{lemma}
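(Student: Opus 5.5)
The plan is to handle both formulas with a single general identity: for scalars $a,b$,
\[
\per(aI_s+bJ_s)=\sum_{k=0}^{s}\binom{s}{k}\,a^{s-k}\,b^{k}\,k!.
\]
To prove it, expand the permanent by multilinearity. Each entry is $a\,[i=j]+b$, so $\prod_i (a[i=\sigma(i)]+b)$ expands over subsets $F\subseteq[s]$. Here $F$ is the set of rows where the $a$-term is chosen, and this forces $F$ to consist of fixed points of $\sigma$. Equivalently, view $\per(aI+bJ)$ as the permanent of a sum: choose a set $T$ of $s-k$ rows and columns (the same set, since $I$ is diagonal) that take the $aI$ part. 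The remaining $k\times k$ principal block takes the $bJ$ part, and its permanent is $b^k k!$. This gives $\sum_{|T|=s-k}a^{s-k}\per(bJ_k)=\sum_k\binom{s}{k}a^{s-k}b^k k!$. One should be careful that this uses the standard formula $\per(A+B)=\sum_{S,T}\per(A[S,T])\per(B(S,T))$ with $|S|=|T|$. Since $\per(aI[S,T])$ vanishes unless $S=T$, only diagonal choices survive.

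For $M=nI_s-J_s$, take $a=n$ and $b=-1$. This gives $\per(M)=\sum_k\frac{s!}{(s-k)!}n^{s-k}(-1)^k$. Reindex with $r=s-k$: the general term is $\frac{s!}{r!}n^r(-1)^{s-r}=(-1)^s s!\frac{(-n)^r}{r!}$, which is \eqref{eq:MIJ-per}.

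For the Hadamard square, $M\circ M$ has diagonal entries $(n-1)^2$ and off-diagonal entries $1$. Hence $M\circ M=(n-1)^2I_s-I_s+J_s=n(n-2)I_s+J_s$. Taking $a=n(n-2)$ and $b=1$ and reindexing the same way gives $s!\sum_r\frac{(n(n-2))^r}{r!}$, which is \eqref{eq:MIJ-perHad}.

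The only step needing care is the expansion of the permanent of a sum restricted to diagonal blocks, together with the bookkeeping of the index shift; everything else is direct substitution.
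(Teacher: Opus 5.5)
Your proof is correct and is essentially the paper's argument: both expand each permutation's weight over subsets of its fixed points and use that exactly $(s-r)!$ permutations fix a given $r$-set. You package this as a general identity for $\per(aI_s+bJ_s)$ and then substitute, while the paper writes the weight as $(-1)^s(1-n)^{f(\sigma)}$ (and $(1+n(n-2))^{f(\sigma)}$ for $M\circ M$) and double-counts $\sum_{\sigma}\binom{f(\sigma)}{r}=s!/r!$.
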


\begin{proof}
Write $M=(m_{ij})$. Then $m_{ii}=n-1$ and $m_{ij}=-1$ for $i\ne j$.
For $\sigma\in S_s$, let $f(\sigma):=\bigl|\{\,i\in[s]:\sigma(i)=i\,\}\bigr|$.
If $\sigma(i)=i$ then $m_{i,\sigma(i)}=n-1$, otherwise $m_{i,\sigma(i)}=-1$. Hence
\[
\prod_{i=1}^s m_{i,\sigma(i)}
=(n-1)^{f(\sigma)}(-1)^{s-f(\sigma)}
=(-1)^s(1-n)^{f(\sigma)}.
\]
Therefore
\[
\per(M)=(-1)^s\sum_{\sigma\in S_s}(1-n)^{f(\sigma)}.
\]
Expanding $(1-n)^{f(\sigma)}$ and swapping sums gives
\[
\per(M)=(-1)^s\sum_{r=0}^{s}(-n)^r\sum_{\sigma\in S_s}\binom{f(\sigma)}{r}.
\]
Now $\binom{f(\sigma)}{r}$ counts the choices of $r$ fixed points of $\sigma$, so
$\sum_{\sigma\in S_s}\binom{f(\sigma)}{r}$ counts pairs $(S,\sigma)$ where $S\subseteq[s]$ has size $r$
and $\sigma$ fixes every element of $S$. There are $\binom{s}{r}$ choices for $S$ and then $(s-r)!$ choices
for $\sigma$ on $[s]\setminus S$. Hence
\[
\sum_{\sigma\in S_s}\binom{f(\sigma)}{r}=\binom{s}{r}(s-r)!=\frac{s!}{r!},
\]
which yields \eqref{eq:MIJ-per}.
For $M\circ M$, we have $(M\circ M)_{ij}=1$ for $i\ne j$ and
\[
(M\circ M)_{ii}=(n-1)^2=1+n(n-2).
\]
Thus for $\sigma\in S_s$,
\[
\prod_{i=1}^s (M\circ M)_{i,\sigma(i)}=(1+n(n-2))^{f(\sigma)}.
\]
Expanding as above and using $\sum_{\sigma}\binom{f(\sigma)}{r}=\frac{s!}{r!}$ again gives \eqref{eq:MIJ-perHad}.
\end{proof}

\subsection{Proofs of Lemmas~\ref{lem:Kn} and \ref{lem:clique-minor}}
\label{app:clique:Kn-minor}\label{app:clique-proofs}

\begin{proof}[Proofs of Lemmas~\ref{lem:Kn} and \ref{lem:clique-minor}]
Let $n\ge 2$ and let $m\in\{n-1,n\}$. Consider the $m\times m$ matrix
\[
M:=nI_m-J_m,
\]
so $M$ has diagonal entries $n-1$ and off-diagonal entries $-1$.

If $m=n$, then $M=L_{K_n}$ (the Laplacian of $K_n$), giving Lemma~\ref{lem:Kn}.
If $m=n-1$, then $M=L_{K_n}(v)$ for any $v\in V(K_n)$ (after reindexing), giving Lemma~\ref{lem:clique-minor}.

By Lemma~\ref{lem:nIminusJ-per} with $s=m$ and this matrix $M$,
\[
\per(M)=(-1)^m\,m!\sum_{r=0}^{m}\frac{(-n)^r}{r!},
\qquad
\per(M\circ M)=m!\sum_{r=0}^{m}\frac{\bigl(n(n-2)\bigr)^r}{r!}.
\]
By Lemma~\ref{lem:clique-scalar} (proved in Appendix~\ref{app:clique-scalar}), we obtain
\[
\per(M\circ M)\le \per(M)^2
\qquad\text{for } m\in\{n-1,n\}.
\]
This completes the proofs of Lemmas~\ref{lem:Kn} and \ref{lem:clique-minor}.
\end{proof}

\subsection{Complete graphs: proof of the scalar inequality}\label{app:clique-scalar}

\begin{lemma}[Taylor's theorem with integral remainder~\cite{spivak1994calculus}]\label{lem:taylor-rem}
Let $k\ge 0$ be an integer. Let $f$ be $(k+1)$ times continuously differentiable on an interval $I$, and let $a,x\in I$.
Then
\begin{equation}\label{eq:taylor-integral-remainder}
f(x)=\sum_{i=0}^{k}\frac{f^{(i)}(a)}{i!}(x-a)^i+\frac{1}{k!}\int_a^x f^{(k+1)}(t)\,(x-t)^k\,dt .
\end{equation}
\end{lemma}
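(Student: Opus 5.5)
The statement is Taylor's theorem with integral remainder. I will prove it by induction on $k$, with the fundamental theorem of calculus as the base case and one integration by parts as the inductive step.

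\emph{Base case $k=0$.} Formula \eqref{eq:taylor-integral-remainder} reads $f(x)=f(a)+\int_a^x f'(t)\,dt$. This is exactly the fundamental theorem of calculus, which applies because $f'$ is continuous on $I$. The interval between $a$ and $x$ lies in $I$, so every integrand is defined. The orientation of $\int_a^x$ is handled by the usual convention, so the case $x<a$ needs no separate argument.

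\emph{Inductive step.} Assume the formula holds for $k-1$, for every function that is $k$ times continuously differentiable; our $f$ qualifies since it is $(k+1)$ times continuously differentiable. The induction hypothesis gives the remainder
\[
R_{k-1}=\frac{1}{(k-1)!}\int_a^x f^{(k)}(t)(x-t)^{k-1}\,dt .
\]
I will integrate by parts with $u=f^{(k)}(t)$ and $dv=(x-t)^{k-1}\,dt$, so that $v=-(x-t)^k/k$. This is legitimate because $f^{(k)}$ is $C^1$. The boundary term $\bigl[-f^{(k)}(t)(x-t)^k/k\bigr]_a^x$ equals $f^{(k)}(a)(x-a)^k/k$, since the contribution at $t=x$ vanishes. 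After dividing by $(k-1)!$ it becomes the new Taylor term $f^{(k)}(a)(x-a)^k/k!$. The remaining integral is $\frac{1}{k}\int_a^x f^{(k+1)}(t)(x-t)^k\,dt$, and dividing by $(k-1)!$ turns it into exactly $R_k$. Substituting $R_{k-1}=\frac{f^{(k)}(a)}{k!}(x-a)^k+R_k$ into the formula for $k-1$ completes the induction.

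There is no genuine obstacle here. The only points needing care are the sign bookkeeping in the choice of antiderivative $v=-(x-t)^k/k$, and noting that the regularity hypothesis is exactly what integration by parts requires at each stage. Since the paper cites the result from~\cite{spivak1994calculus}, one could also simply invoke the reference.
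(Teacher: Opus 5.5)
Your proof is correct. The base case is exactly the fundamental theorem of calculus. Integrating by parts with $v=-(x-t)^k/k$ produces precisely the new term $f^{(k)}(a)(x-a)^k/k!$ together with the remainder $\frac{1}{k!}\int_a^x f^{(k+1)}(t)(x-t)^k\,dt$, and the $C^{k+1}$ hypothesis justifies each step.

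The paper gives no proof of its own and only cites Spivak, so your induction is a valid self-contained substitute for that citation. As far as I recall, the textbook argument avoids induction: it differentiates the remainder of the expansion about a variable base point $t$, the sum telescopes to $-f^{(k+1)}(t)(x-t)^k/k!$, and it then integrates once from $a$ to $x$.
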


\begin{lemma}\label{lem:clique-scalar}
Let $n\ge 2$ be an integer and let $m\in\{n-1,n\}$. Then
\begin{equation}\label{eq:clique-scalar}
\sum_{k=0}^{m}\frac{\bigl(n(n-2)\bigr)^k}{k!}
\ \le\
m!\left(\sum_{k=0}^{m}\frac{(-n)^k}{k!}\right)^2.
\end{equation}
\end{lemma}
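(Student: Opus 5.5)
The plan is to write $x:=n(n-2)$ and $S:=\sum_{k=0}^{m}(-n)^k/k!$, and then do three things. First, find an exact integral formula for $S$ using Lemma~\ref{lem:taylor-rem}. Second, bound the right-hand side $m!\,S^2$ from below. Third, bound the truncated sum on the left from above by a multiple of its last term. The two sides are then compared for large $n$, and the finitely many small $n$ are checked by direct computation.

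\emph{Step 1 (exact form of $S$).} Apply Lemma~\ref{lem:taylor-rem} to $f(t)=e^t$ with $a=0$, $x=-n$, $k=m$, and substitute $t=-u$ in the remainder. This gives
\[
S \;=\; e^{-n} + \frac{(-1)^m}{m!}\,I_m,\qquad I_m:=\int_0^n e^{-u}(n-u)^m\,du>0 .
\]
Hence $|S|\ge I_m/m!-e^{-n}$, and so $m!\,S^2\ge \bigl(I_m-m!\,e^{-n}\bigr)^2/m!$. The correction $m!\,e^{-n}$ is negligible next to $I_m$, since $I_m$ is of order $n^m$.

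\emph{Step 2 (lower bound on $I_m$).} Write $I_m=n^m\int_0^n e^{-u}(1-u/n)^m\,du$. On $0\le u\le n/2$, use the elementary bound $1-y\ge e^{-y-y^2}$ for $0\le y\le 1/2$. For $m\le n$ this gives a Laplace-type estimate
\[
I_m\ \ge\ n^m\int_0^{n/2}e^{-u(1+m/n)-u^2 m/n^2}\,du\ \ge\ c\,n^m ,
\]
with an explicit constant $c$ close to $1/(1+m/n)\approx 1/2$.

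\emph{Step 3 (upper bound on the left side).} For $n\ge 4$ we have $x=n(n-2)\ge m+1$, so the terms $x^k/k!$ increase in $k$ and the ratio of consecutive terms read backwards from $k=m$ is at most $m/x$. Summing the resulting geometric series gives
\[
\sum_{k=0}^m \frac{x^k}{k!}\ \le\ \frac{x^m}{m!}\cdot\frac{1}{1-m/x}.
\]
After multiplying by $m!$, it then suffices to show
\[
\bigl(c\,n^m-m!\,e^{-n}\bigr)^2\ \ge\ \frac{(n(n-2))^m}{1-m/(n(n-2))}.
\]
The key gain is the factor $\bigl(n/(n-2)\bigr)^m=(1+2/(n-2))^m$. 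For $m\in\{n-1,n\}$ this is at least roughly $e^{2}\approx 7.39$, which beats $1/c^2\approx 4$ times the factor $1+O(1/n)$. The inequality therefore holds for all $n\ge N_0$ with a modest explicit $N_0$. For $2\le n<N_0$ and both values of $m$, I would verify \eqref{eq:clique-scalar} by direct evaluation of both finite sums.

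The main obstacle is Step 2 together with the bookkeeping in the final comparison. The margin is only a constant factor ($e^2$ versus about $4$), so crude estimates such as $I_m\ge(1-e^{-1})(n-1)^m$ lose exactly the factor $e^2$ and fail. I would first try the exponential bound on $(1-u/n)^m$ above and evaluate the resulting Gaussian-type integral from below by dropping the $u^2$ term on a short interval $[0,\sqrt{n}]$. If the constant is then too weak, I would instead raise $N_0$ and extend the finite check.
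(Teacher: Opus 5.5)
Your plan is sound and follows the paper's proof in outline. Both arguments use the same four ingredients:
\begin{itemize}
\item a lower bound on $|T_m|$, where $T_m=\sum_{k\le m}(-n)^k/k!$ (your $S$), from the integral form of the Taylor remainder;
\item domination of the left-hand sum by its top term times a geometric series (the paper uses ratio $1/(n-2)$, you use $m/(n(n-2))\le 1/(n-2)$);
\item a final comparison in which a power of $n/(n-2)$, roughly $e^2$, beats a constant close to $4$ (the paper settles this with the binomial theorem);
\item a direct check of small $n$ (the paper proves $n\ge 8$ and checks $2\le n\le 7$).
\end{itemize}

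The one substantive difference is your Step~2, which the paper handles without any Laplace-type analysis. Since $1-t\le e^{-t}$, one has $e^{-nt}(1-t)^m\ge (1-t)^{n+m}$ on $[0,1]$, hence
\[
I_m=n^{m+1}\int_0^1 e^{-nt}(1-t)^m\,dt\ \ge\ \frac{n^{m+1}}{n+m+1}.
\]
This is an exact Beta integral giving $c=n/(n+m+1)$, which already has the sharp limiting value $\tfrac12$. It removes what you call the main obstacle and makes the threshold explicit. For $m=n$ the paper also cancels the last two terms of $T_n$ and expands only to order $n-2$; that is cosmetic.

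Your own route also works. For example, applying $e^{-s}\ge 1-s$ on $[0,n/2]$ gives $c\ge \tfrac12-\tfrac1{4n}-e^{-n/2}$ for both values of $m$.

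Be careful, though, with the fallback you describe. If you bound $e^{-u^2m/n^2}$ by its minimum $e^{-m/n}\approx e^{-1}$ on $[0,\sqrt n]$, you lose a factor of about $e^{-2}$ in $c^2$. That is exactly the available margin, and no choice of $N_0$ recovers it. On that interval use $u^2m/n^2\le u\,m/n^{3/2}$ instead, which only perturbs the exponential rate by $O(n^{-1/2})$.
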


\begin{proof}
For $2\le n\le 7$, \eqref{eq:clique-scalar} holds for $m=n-1$ and $m=n$ by direct computation.
Assume henceforth that $n\ge 8$, and fix $m\in\{n-1,n\}$.

Set
\[
a_k:=\frac{\bigl(n(n-2)\bigr)^k}{k!}\qquad (0\le k\le m).
\]
For $0\le j\le m$,
\begin{align}
\frac{a_{m-j}}{a_m}
&=\frac{\bigl(n(n-2)\bigr)^{m-j}}{(m-j)!}\cdot\frac{m!}{\bigl(n(n-2)\bigr)^m} \notag\\
&=\frac{m!}{(m-j)!}\cdot\frac{1}{\bigl(n(n-2)\bigr)^j} \notag\\
&=\Bigl(\prod_{r=0}^{j-1}(m-r)\Bigr)\cdot\frac{1}{n^j(n-2)^j} \notag\\
&\le \Bigl(\prod_{r=0}^{j-1}n\Bigr)\cdot\frac{1}{n^j(n-2)^j}
=\frac{1}{(n-2)^j},
\label{eq:clique-scalar-ratio}
\end{align}
since $m\le n$.
Therefore,
\[
\sum_{k=0}^{m}a_k=\sum_{j=0}^{m}a_{m-j}
=a_m\sum_{j=0}^{m}\frac{a_{m-j}}{a_m}
\le a_m\sum_{j=0}^{m}\frac{1}{(n-2)^j}
\le a_m\sum_{j=0}^{\infty}\frac{1}{(n-2)^j}.
\]
Since $n\ge 8$, we have $n-2\ge 6$, so the geometric series converges and equals
\[
\sum_{j=0}^{\infty}\frac{1}{(n-2)^j}=\frac{1}{1-\frac{1}{n-2}}=\frac{n-2}{n-3}.
\]
Hence
\begin{equation}\label{eq:clique-scalar-LHS}
\sum_{k=0}^{m}\frac{\bigl(n(n-2)\bigr)^k}{k!}
\le \frac{n-2}{n-3}\cdot \frac{\bigl(n(n-2)\bigr)^m}{m!}.
\end{equation}

For $n\ge 4$,
\[
e^n=\sum_{k\ge0}\frac{n^k}{k!}\ge \frac{n^2}{2}+\frac{n^3}{6}\ge 4n,
\]
so
\begin{equation}\label{eq:clique-scalar-en}
e^{-n}\le \frac{1}{4n}.
\end{equation}

Set
\[
T_m:=\sum_{k=0}^{m}\frac{(-n)^k}{k!}.
\]

\medskip
\noindent\emph{Lower bounds on $|T_m|$.}

\smallskip
\noindent\emph{Case 1: $m=n$.}
The last two terms cancel:
\[
\frac{(-n)^{n-1}}{(n-1)!}+\frac{(-n)^n}{n!}
=\frac{(-n)^{n-1}}{(n-1)!}\left(1+\frac{-n}{n}\right)=0,
\]
so
\begin{equation}\label{eq:clique-scalar-trunc}
T_n=\sum_{k=0}^{n-2}\frac{(-n)^k}{k!}.
\end{equation}
Apply Lemma~\ref{lem:taylor-rem} to $f(x)=e^{-x}$ with $a=0$, $k=n-2$, and $x=n$:
\[
e^{-n}=T_n+\frac{(-1)^{n-1}}{(n-2)!}\int_0^{n}e^{-u}(n-u)^{n-2}\,du.
\]
Taking absolute values and using $|x+y|\ge |y|-|x|$ gives
\begin{equation}\label{eq:clique-scalar-Sn-raw}
|T_n|\ge \frac{1}{(n-2)!}\int_0^{n}e^{-u}(n-u)^{n-2}\,du - e^{-n}.
\end{equation}
Rewrite the integral by $u=nt$:
\[
\int_0^{n}e^{-u}(n-u)^{n-2}\,du
=n^{n-1}\int_0^1 e^{-nt}(1-t)^{n-2}\,dt.
\]
For $t\in[0,1)$, we have $\ln(1-t)\le -t$, hence $1-t\le e^{-t}$ and therefore
\[
(1-t)^n\le e^{-nt}\qquad (t\in[0,1]).
\]
Thus
\[
\int_0^{n}e^{-u}(n-u)^{n-2}\,du
\ge n^{n-1}\int_0^1 (1-t)^{2n-2}\,dt
=\frac{n^{n-1}}{2n-1}.
\]
Also $(n-2)!\le n^{n-2}$, so
\[
\frac{1}{(n-2)!}\cdot \frac{n^{n-1}}{2n-1}
\ge \frac{n}{2n-1}\ge \frac12.
\]
Moreover,
\[
\frac{1}{2n}\cdot \frac{1}{(n-2)!}\cdot \frac{n^{n-1}}{2n-1}
\ge \frac{1}{4n}\ge e^{-n}
\]
by \eqref{eq:clique-scalar-en}. Substituting into \eqref{eq:clique-scalar-Sn-raw} yields
\[
|T_n|
\ge \frac{1}{(n-2)!}\cdot \frac{n^{n-1}}{2n-1}\Bigl(1-\frac{1}{2n}\Bigr)
=\frac{n^{n-1}}{2n\,(n-2)!}
=\frac{n^{n-2}}{2(n-2)!}.
\]
Therefore
\begin{equation}\label{eq:clique-scalar-RHS-n}
n!\,T_n^2\ \ge\ n!\cdot \frac{n^{2n-4}}{4(n-2)!^2}.
\end{equation}

\smallskip
\noindent\emph{Case 2: $m=n-1$.}
Apply Lemma~\ref{lem:taylor-rem} to $f(x)=e^{-x}$ with $a=0$, $k=n-1$, and $x=n$:
\[
e^{-n}=T_{n-1}+\frac{(-1)^{n}}{(n-1)!}\int_0^{n}e^{-u}(n-u)^{n-1}\,du.
\]
Taking absolute values and using $|x+y|\ge |y|-|x|$ gives
\begin{equation}\label{eq:clique-scalar-Sn1-raw}
|T_{n-1}|\ge \frac{1}{(n-1)!}\int_0^{n}e^{-u}(n-u)^{n-1}\,du - e^{-n}.
\end{equation}
Rewrite the integral by $u=nt$:
\[
\int_0^{n}e^{-u}(n-u)^{n-1}\,du
=n^{n}\int_0^1 e^{-nt}(1-t)^{n-1}\,dt.
\]
Using $e^{-nt}\ge (1-t)^n$ again,
\[
\int_0^{n}e^{-u}(n-u)^{n-1}\,du
\ge n^{n}\int_0^1 (1-t)^{2n-1}\,dt
=\frac{n^{n}}{2n}
=\frac{n^{n-1}}{2}.
\]
Also $(n-1)!\le n^{n-1}$, so $\frac{n^{n-1}}{2(n-1)!}\ge \frac12$, hence
\[
\frac{1}{2n}\cdot \frac{n^{n-1}}{2(n-1)!}
\ge \frac{1}{4n}\ge e^{-n}
\]
by \eqref{eq:clique-scalar-en}. Substituting into \eqref{eq:clique-scalar-Sn1-raw} gives
\[
|T_{n-1}|
\ge \frac{n^{n-1}}{2(n-1)!}\Bigl(1-\frac{1}{2n}\Bigr)
=\frac{n^{n-1}(2n-1)}{4n\,(n-1)!}.
\]
Therefore
\begin{equation}\label{eq:clique-scalar-RHS-n1}
(n-1)!\,T_{n-1}^2\ \ge\ \frac{n^{2n-4}(2n-1)^2}{16\,(n-1)!}.
\end{equation}

\medskip
\noindent\emph{Final comparisons.}

\smallskip
\noindent\emph{Conclusion for $m=n$.}
By \eqref{eq:clique-scalar-LHS} and \eqref{eq:clique-scalar-RHS-n}, it suffices to show
\[
\frac{n-2}{n-3}\cdot \frac{\bigl(n(n-2)\bigr)^n}{n!}
\le
n!\cdot \frac{n^{2n-4}}{4(n-2)!^2}.
\]
Multiplying by $n!$ gives
\[
\frac{n-2}{n-3}\,\bigl(n(n-2)\bigr)^n
\le
\frac{(n!)^2\,n^{2n-4}}{4(n-2)!^2}.
\]
Using $n!=n(n-1)(n-2)!$ gives $\frac{(n!)^2}{(n-2)!^2}=n^2(n-1)^2$, and
$\bigl(n(n-2)\bigr)^n=n^n(n-2)^n$, so the inequality becomes
\[
\frac{n-2}{n-3}\,n^n(n-2)^n
\le
\frac{n^{2n-2}(n-1)^2}{4}.
\]
Dividing by $n^n$ and multiplying by $4(n-3)$ gives
\[
4(n-2)^{n+1}\le (n-3)(n-1)^2\,n^{n-2}.
\]
Dividing by $(n-3)(n-1)^2 (n-2)^{n-2}$ gives
\[
\left(\frac{n}{n-2}\right)^{n-2}
\ge \frac{4(n-2)^3}{(n-3)(n-1)^2}.
\]
By the binomial theorem,
\[
\left(1+\frac{2}{n-2}\right)^{n-2}\ge 1+2+\binom{n-2}{2}\left(\frac{2}{n-2}\right)^2
=5-\frac{2}{n-2}\ge 4.
\]
Also
\[
(n-1)^2(n-3)-(n-2)^3
=n^2-5n+5=(n-4)(n-1)+1\ge 0,
\]
so $\frac{(n-2)^3}{(n-3)(n-1)^2}\le 1$ and the right-hand side is $\le 4$.
Thus the inequality holds.

\smallskip
\noindent\emph{Conclusion for $m=n-1$.}
By \eqref{eq:clique-scalar-LHS} and \eqref{eq:clique-scalar-RHS-n1}, it suffices to show
\[
\frac{n-2}{n-3}\cdot \frac{\bigl(n(n-2)\bigr)^{n-1}}{(n-1)!}
\le
\frac{n^{2n-4}(2n-1)^2}{16\,(n-1)!}.
\]
Cancel $(n-1)!$ and rewrite $\bigl(n(n-2)\bigr)^{n-1}=n^{n-1}(n-2)^{n-1}$ to get
\[
16\,\frac{n-2}{n-3}\,n^{n-1}(n-2)^{n-1}\le n^{2n-4}(2n-1)^2.
\]
Dividing by $n^{n-1}$ and multiplying by $(n-3)$ gives
\[
16\,(n-2)^n\le (n-3)(2n-1)^2\,n^{n-3}.
\]
Dividing by $(n-3)(2n-1)^2 (n-2)^{n-3}$ gives
\[
\left(\frac{n}{n-2}\right)^{n-3}
\ge \frac{16(n-2)^3}{(n-3)(2n-1)^2}.
\]
Write $x:=n-2\ge 6$, so the left-hand side is $\left(1+\frac{2}{x}\right)^{x-1}$.
By the binomial theorem,
\[
\left(1+\frac{2}{x}\right)^{x-1}
\ge
1+\frac{2(x-1)}{x}
+\binom{x-1}{2}\left(\frac{2}{x}\right)^2
+\binom{x-1}{3}\left(\frac{2}{x}\right)^3.
\]
For $x\ge 6$, we have $\frac{x-1}{x}\ge\frac56$, $\frac{x-2}{x}\ge\frac23$, and $\frac{x-3}{x}\ge\frac12$,
so the right-hand side is at least
\[
1+2\cdot\frac56
+2\cdot\frac56\cdot\frac23
+\frac43\cdot\frac56\cdot\frac23\cdot\frac12
=\frac{112}{27}>4.
\]
Also
\[
(n-3)(2n-1)^2-4(n-2)^3
=8n^2-35n+29
=(n-4)(8n-3)+17\ge 0,
\]
so $\frac{(n-2)^3}{(n-3)(2n-1)^2}\le \frac14$ and the right-hand side is $\le 4$.
Thus the inequality holds.
This completes the proof for $n\ge 8$, and the remaining cases $2\le n\le 7$ were checked directly.
\end{proof}

\end{document}